\long\def\skipit#1{} 
\newcommand{\mdef}[1]{\textit{#1}}
\newcommand{\ov}{\overline}
\def\dif{{\mathord{{\rm d}}}}
\def\mE{{\mathbb E}}
\def\mN{{\mathbb N}}
\def\mP{{\mathbb P}}
\def\mR{{\mathbb R}}
\def\mU{{\mathbb U}}
\def\mW{{\mathbb W}}
\def\mZ{{\mathbb Z}}
\def\eps{\varepsilon}
\def\cE{{\mathcal E}}
\def\cG{{\mathcal G}}
\newcounter{hours}
\newcounter{minutes}
\newcommand{\printtime}{
	\setcounter{hours}{\time/60}%
	\setcounter{minutes}{\time-\value{hours}*60}
	\ifthenelse{\value{hours}<10}{0}{}\thehours:%
	\ifthenelse{\value{minutes}<10}{0}{}\theminutes}
\numberwithin{equation}{section}
\numberwithin{figure}{section}
\numberwithin{table}{section}
\newtheorem{thm}{Theorem}[section]
\newtheorem{cor}[thm]{Corollary}
\newtheorem{prop}[thm]{Proposition}
\newtheorem{question}[thm]{Question}
\newtheorem{J-com}{JG-comment}[section]
\newtheorem{Rem}[thm]{Remark}
\theoremstyle{definition}
\newtheorem{example}{Example}[section]
\def\dsum{\displaystyle\sum}
\begin{document}

\title{ Limits for  embedding distributions}
\author{Jinlian Zhang}
\address{College of Mathematics and Econometrics, Hunan University, 410082 Changsha, China}
\email{jinlian916@hnu.edu.cn}
\author{Xuhui Peng}
\address{MOE-LCSM, School of mathematics and statistics,  Hunan Normal University, 410081 Changsha, China}
\email{xhpeng@hunnu.edu.cn}
\author{Yichao Chen}
\address{College of Mathematics and Econometrics, Hunan University, 410082 Changsha, China}
\email{ycchen@hnu.edu.cn}
\thanks{Yichao Chen currently works at SuZhou University of Science and Technology}


\begin{abstract}
  In this paper, we first establish a  central limit theorem which is  new in probability, then we find and prove that, under  some  conditions, the embedding distributions of $H$-linear family of graphs with spiders  are asymptotic normal distributions.  As  corollaries, the asymptotic normality for the embedding distributions of path-like sequence of  graphs with spiders and the  genus distributions of ladder-like sequence of graphs are given.  We also prove that the limit of Euler-genus distributions is the same as  that of crosscap-number distributions. The results here can been seen a version of  central limit theorem in topological graph theory.

\end{abstract}

\begin{flushleft}   \vskip-24pt
\end{flushleft}
\bigskip

\keywords{central limit theorem;  embedding distributions; limits; $H$-{linear} family of graphs with spiders;  normal distribution}

\subjclass[2000] {Primary:05C10; Secondary:05A15; 05C30}
\maketitle

\section{\large{Introduction}  \label{background} }  


A \textit{graph }is a pair $G = (V, E),$ where $V = V (G)$ is the
 set of vertices, and $E = E(G)$ is  the set of edges. In topological graph theory, a graph {is permitted to have both loops and multiple edges}.  A {\textit{surface}} {$S$}  is a compact connected 2-dimensional manifold
without boundary.  The {\textit{orientable surface}} $O_k(k\geq 0)$ can be obtained from a sphere with $k$ handles attached, where $k$ is called the {\textit{genus}} of $O_k$,  and the {\textit{non-orientable  surface}} $N_j(j\geq 1)$ with $j$ crosscaps, where $j$ is called the {\textit{ crosscap-number}} of $N_j$. The {\textit{Euler-genus}} $\gamma^E$ of a surface $S$ is {given by}
 \begin{eqnarray*}
   \gamma^E=
   \left\{
   \begin{split}
     & 2k,\quad \text{if } S=O_k,
     \\
     & j, \quad   \text{~~if }S=N_j.
   \end{split}
   \right.
 \end{eqnarray*}
 We use $S_i$ to denote the surface {$S$} with Euler-genus $i$, for $i\geq 0$.

{{A graph $G$ is \textit{embeddable }into  a surface $S$} if it can be drawn in the surface such that any edge does not pass through any vertex and any two edges do not cross each other. If $G$ is embedded on the surface $S$, then the components of $S-G$ are the \textit{faces} of the embedding. {A graph embedding is called a  \textit{2-cell {cellular}  embedding} if any simple {closed}  curve in that face can be continuously deformed or contracted in that face to a single point.} {All graph embeddings in the paper are 2-cell {cellular} embeddings}}.

A \mdef{rotation at a vertex} $v$ of a graph $G$ is a cyclic ordering of the edge-ends incident at $v$. A \mdef{{(pure)} rotation system} $\rho$ of a graph $G$ is an assignment of a rotation at every vertex of $G$. A \mdef{general rotation system} for a graph $G$ is a pair $(\rho,\lambda)$, where $\rho$ is a rotation system and $\lambda$ is a map on $E(G)$ with values in $\{0,1\}$. If $\lambda(e)=1$, then the edge $e$ is said to be \mdef{twisted}; otherwise $\lambda(e)=0,$ and we call the edge~$e$ \mdef{untwisted}.  If  $\lambda(e)=0$, for all $e\in E(G)$, then the general rotation system $(\rho,\lambda)$ is  a {pure rotation system}. It is well-known that any graph
embedding can be described by a general rotation system. Let $T$ be a spanning tree of $G,$ a \mdef{$T$-rotation system} $(\rho,\lambda)$ of $G$ is a general rotation system $(\rho,\lambda)$ such that $\lambda(e)=0$, for every edge $e \in E(T)$.  For a fixed spanning tree $T$, two embeddings of $G$ are considered to be \textit{{equivalent}} if their $T$-rotation systems are combinatorially equivalent.  It is known that there is a sequence of vertex-flips that transforms a general rotation system into a $T$-rotation system.

 The number of \textit{(distinct) cellular embeddings} of a graph $G$ on the {surfaces} $O_k,N_j,$ and $S_i$
 are denoted by $\gamma_k(G),\tilde{\gamma}_j(G)$, and $\eps_i(G),$ respectively.
{By the \emph{genus distribution} of a graph $G$ we mean the sequence}
$$\gamma_0(G),\gamma_1(G),\gamma_2(G),\cdots ,$$
and  the \emph{genus polynomial} of $G$ is
$$\Gamma_G(x)=\sum_{k=0}^\infty \gamma_k(G)x^k.$$

\noindent Similarly, we have \textit{the crosscap-number distribution} $\{\tilde{\gamma}_i(G)\}_{n=1}^{\infty}$ and \textit{the Euler-genus distribution} $\{\eps_i(G)\}_{n=1}^{\infty}.$
{The \emph{crosscap-number polynomial} $ \tilde{\Gamma}_G(x)$
and the \emph{Euler-genus polynomial} $ \cE_G(x)$ of $G$ are defined analogously.}

For a deeper discussion of the above concepts, we may refer the reader to \cite{CJ94,CG18}.
\noindent {The following assumption will be needed throughout the paper. When we say \textit{embedding distribution} of a graph $G$, we mean its genus distribution, crosscap-number distribution or Euler-genus distribution.}

 Usually,  the embedding distribution of a graph $G$ with  tractable size can been calculated explicitly,  we  still concern  the  global feature of the embedding  distribution of  $G$.
For example: \textbf{(1)}   {\textit{Log-concavity}}. For this aspect, we refer to \cite{GMT,GMTW,GMTW16-3,GMTW16-2,SS97} etc.
\textbf{(2) } {\textit{Average genus},  \textit{average crosscap-number} and \textit{average  Euler-genus}.}
The {\textit{average genus}} of graph $G $ is given by \begin{eqnarray*}
    {\gamma_{avg}(G)}&=&\frac{\Gamma_{G}'(1)}{\Gamma_{G}(1)}= \sum_{k=0}^{\infty } \frac{k \cdot \gamma_k(G)}{\Gamma_{G}(1)},
    \end{eqnarray*}
 the {\textit{average crosscap-number}} $\tilde{\gamma}_{avg}(G)$ and {\textit{average Euler-genus}} $\eps_{avg}(G)$ of a graph $G $ is  similarly defined.
The study of average genus,  average crosscap-number and average  Euler-genus {received many attentions in
topological graph theory}. For researches on this aspect, one can see \cite{CJ92,SS83,White} etc. There is also a notation of \emph{variance}.  For example,  see that in   \cite{SS90}.
The {\textit{variance}} of the genus distribution
of the graph  $G$ is given by
\begin{align*}
\gamma_{var}(G)=\sum_{k=0}^{\infty }\big(k-\gamma_{avg}(G)\big)^2\cdot \frac{\gamma_k(G)}{\Gamma_{G}(1)}.
\end{align*}
 We define  the  variance of crosscap-number distribution $\tilde{\gamma}_{var}(G)$ and of Euler-genus distribution $\eps_{var}(G)$  similarly.


The motivation of this article is as follows.
Let  $\{ G_n^\circ\}_{n=1}^\infty$
be a sequence of linear family of graphs  with spiders whose definition is given in subsection 3.1,
and we denote the  embedding distribution of graph $G_n^\circ$  by $\{p_i(n)\}_{i=0}^{\infty } .$
The normalized sequence of $\{p_i(n)\}_{i=0}^{\infty } $ is
\begin{eqnarray*}
  \frac{p_i(n)}{\sum_{k=0}^\infty  p_k(n)},\quad i=0,1,\cdots.
\end{eqnarray*}
Then, the above sequence is a distribution    in probability, we denote it by $F_n$.
One problem appears, when $n$ is big enough, whether the  distribution    $F_n$  will look like some well-known distribution  in probability. If the answer is yes, then it demonstrates   the outline of   embedding  distribution for   graph $G_n^\circ$ when $n$ is big enough.
In the point of mathematics, this is to seek the limit for $F_n$ or  the embedding  distribution  of  graph   $G_n^\circ$.

In this paper, we make  researches on
 the embedding distributions which are   closely related to the above problem. Under some  weak conditions,
 we show  the embedding  distributions (genus,  crosscap-number  or   Euler-genus distributions)
  of $G_n^\circ$ are {\textit{asymptotically normal distribution} }when $n$ tends to infinity.
 We say  the embedding  distributions (genus,  crosscap-number  or   Euler-genus distributions)
  of $G_n^\circ$ are \textit{asymptotically normal distribution} with mean $\mu_n$ and variance $\sigma_n^2$ if \begin{eqnarray*}
    \lim_{n\rightarrow \infty}\sup_x\left|\sum_{i\leq \sigma_nx+\mu_n}p_i(n)-\frac{1}{\sqrt{2\pi}}\int_{-\infty}^xe^{-t^2/2}\dif t\right|=0.
  \end{eqnarray*}
 Since normal distributions have many very good properties,   the genus distributions (crosscap-number  or   Euler-genus distributions)
  of $G_n^\circ$ also have many good properties when $n$ is big enough. Such as:
(1) {\textit{Symmetry}}, normal distributions are symmetric around their mean. (2)
{\textit{Normal distributions are defined by two parameters, the mean $\mu$ and the standard deviation $\sigma$.
}}Approximately 95\% of the area of a normal distribution is within two standard deviations of the mean.
This implies that the genus distributions
 of $G_n^\circ$ are mainly concentrated
on the interval $(\gamma_{avg}(G_n^\circ)-2\sqrt{\gamma_{var}(G_n^\circ)},\gamma_{avg}(G_n^\circ)+2\sqrt{\gamma_{var}(G_n^\circ)}]$ when $n$ {is} big enough.
Similar results also hold for
crosscap-number  and   Euler-genus distributions.
We also show that  the genus distributions (crosscap-number  or   Euler-genus distributions)
  of $G_n^\circ$ are not always    asymptotically normal distribution.
 This  is  the {first} time someone  prove  the   embedding   distributions   of some families  of graphs   are   asymptotically normal.

 In Section 2,  we  establish a   central limit theorem which is also new in probability.
In Section 3, we apply this central limit theorem to the embedding distributions of $G_n^\circ$ and give their  limits.
 In Section 4, some examples are demonstrated.

\section{A central limit theorem }
For a non-negative integer  sequence$\{p_i(n)\}_{i=0}^\infty$, let  $P_n(x) =\sum\limits_{i=0}^{\infty}p_{i}(n)x^i,x\in\mR$.
In this section, we always assume  $P_n(x)$
 satisfies  a   $k^{th}$-order homogeneous linear recurrence relation
\begin{eqnarray}
\label{wp-12}
  P_{n}(x)=b_1(x)P_{n-1}(x)+b_2(x)P_{n-2}(x)+\cdots+
  b_k(x)P_{n-k}(x),
\end{eqnarray}
where  $b_j(x)$$(1\leq j \leq k)$ are polynomials with  integer coefficients.
We define a   polynomial associated with  (\ref{wp-12})
 \begin{eqnarray}
\label{555-1}
F(x,\lambda)=\lambda^k-b_1(x)\lambda^{k-1}-b_2(x)\lambda^{k-2}\cdots-b_{k-1}(x)\lambda-b_k(x).
\end{eqnarray}

Obviously,  for any $x\in \mR,$ there exist  some $r=r(x)\in\mN$,  $m_1(x),\cdots, m_r(x)\in \mN$  with $\sum_{i=1}^r m_i(x)=k$,  and  numbers $\lambda_i(x),i=1,\cdots,r$ with    $|\lambda_1(x)|\geq |\lambda_2(x)|\geq |\lambda_3(x)|\geq \cdots |\lambda_r(x)|$  such that
\begin{eqnarray}
\label{4-1}
 F(x,\lambda)=
 (\lambda-\lambda_1(x){)}^{m_1(x)}\cdots (\lambda-\lambda_r(x)^{m_r(x)}.
\end{eqnarray}
 And the general solution to  (\ref{wp-12}) is given by
\begin{eqnarray}
\label{pz-1}
\begin{split}
P_n(x) = \sum_{i=1}^r \lambda^n_i(x)\big(a_{i,0}(x)+a_{i,1}(x)n+\cdots+ a_{i,m_i(x)-1}(x)n^{m_i(x)-1}\big).
 \end{split}
\end{eqnarray}

Let
\begin{eqnarray}
\label{ppp-1}
  e=\frac{\lambda_1'(1)}{D},\quad  v=\frac{-\big(\lambda_1'(1)\big)^2 +D\cdot  \lambda_1^{''}(1)
 +D\cdot\lambda_1'(1) }{D^2}.
\end{eqnarray}
where $D=\lambda_1(1)$.
For any $n\in \mN,$  let  $X_n$ be  a random variable with distribution
\begin{eqnarray*}
\mP(X_n=i)=\frac{p_{i}(n)}{P_n(1)}, \quad i=0,1,\cdots,
\end{eqnarray*}

{The remainder of  this section is  devoted to the proof of the following theorem.}
\begin{thm}
\label{11-3}
Let $P_n(x) =\sum_{i=0}^{\infty}p_{i}(n)x^i, n\geq k+1$ be   polynomials satisfying   (\ref{wp-12}).
At  $x=1,$ suppose the multiplicity of maximal root for  polynomial (\ref{555-1})  is 1.  Then  the following results hold  depending on the value of $v.$
\begin{description}
  \item[Case I] $ v>0$.
  The law  of  $X_n $  is asymptotically normal  with mean    $e\cdot n$ and variance $ v\cdot n$ when $n$ tends to infinity. That is,
  \begin{eqnarray*}
  \lim_{n\rightarrow \infty } \sup_{x\in \mR }\left|\mP(\frac{X_n-e\cdot n}{ \sqrt{v \cdot n}} \leq x)-\int_{-\infty}^x \frac{1}{\sqrt{2\pi}}e^{-\frac{1}{2}u^2}\dif u\right|=0.
  \end{eqnarray*}
  In particular,    we have
    \begin{eqnarray*}
   \lim_{n\rightarrow \infty } \sup_{x\in \mR}\left|\frac{1}{P_n(1)}\sum_{\tiny{0\leq i\leq x \sqrt{ v\cdot n}+e\cdot n]}}p_i(n)-\int_{-\infty}^x \frac{1}{\sqrt{2\pi}}e^{-\frac{1}{2}u^2}\dif u\right|=0.
  \end{eqnarray*}
  \item[Case II] $v<0$. This case is impossible to appear.
  \item[Case III] $ v=0$.
  For any $\alpha>\frac{1}{3}$,
  the  law   of  $\frac{X_n-e\cdot n}{n^\alpha} $  is asymptotically one-point distribution concentrated  at $0$. In more accurate words, the following holds.
   \begin{eqnarray*}
  \label{12-4}
    \lim_{n\rightarrow \infty } \mP\left(\frac{X_n-e\cdot n}{n^\alpha} \leq x\right)=
    \Bigg\{\begin{split}
      &1, \quad \text{ if  $x\geq 0,$ }
      \\
      &0, \quad \text{ else. }
    \end{split}
  \end{eqnarray*}
  Furthermore, if  all these  functions   $b_1(x),\cdots,b_k(x)$ are   constant,
  then the limits of the law of $X_n$ is  a  discrete distribution.  That is,  for some  $\kappa\in \mN$
  and $\omega_j,j=0,\cdots,\kappa$ with $\sum\limits_{j=1}^\kappa\omega_j=1$,  we have
  \begin{eqnarray}
  \label{1212-1}
   \lim_{n\rightarrow \infty}\mP(X_n=j)=\omega_j,j=0,\cdots,\kappa.
  \end{eqnarray}
  \end{description}
\end{thm}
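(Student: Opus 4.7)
The plan is to work with the probability generating function $\mE[x^{X_n}]=P_n(x)/P_n(1)$, extract dominant-root asymptotics from the recurrence (\ref{wp-12}), and then apply L\'evy's continuity theorem. Since $\lambda_1(1)$ is a simple root of $F(1,\lambda)$, the implicit function theorem applied to $F(x,\lambda)=0$ produces a complex neighborhood $U$ of $x=1$ on which $\lambda_1(x)$ extends as a holomorphic function, remains simple, and strictly dominates every other root in modulus. Substituting into (\ref{pz-1}) then yields, uniformly on compact subsets of $U$,
\begin{equation*}
  P_n(x)=a_{1,0}(x)\,\lambda_1(x)^n\bigl(1+R_n(x)\bigr),\qquad R_n(x)=O\!\left(n^{k-1}\max_{j\ge 2}\Bigl|\tfrac{\lambda_j(x)}{\lambda_1(x)}\Bigr|^n\right),
\end{equation*}
and a short check against $P_n(1)\sim a_{1,0}(1)D^n$ confirms $a_{1,0}(1)\neq 0$.

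Setting $\Lambda(u):=\log\lambda_1(e^u)-\log D$, direct differentiation gives $\Lambda'(0)=\lambda_1'(1)/D=e$ and $\Lambda''(0)=v$ (the quantities in (\ref{ppp-1})), identifying $e$ and $v$ as the asymptotic mean and variance per step. The previous display then provides, uniformly on compact $t$-sets,
\begin{equation*}
  \log \mE\bigl[e^{itX_n/s_n}\bigr]=n\,\Lambda(it/s_n)+o(1)\qquad\text{for any $s_n\to\infty$.}
\end{equation*}
In Case I ($v>0$) I take $s_n=\sqrt{vn}$ and Taylor-expand $\Lambda$ to second order, obtaining $n\,\Lambda(it/\sqrt{vn})=iet\sqrt{n/v}-\tfrac12 t^2+O(n^{-1/2})$, so the characteristic function of $(X_n-en)/\sqrt{vn}$ converges pointwise to $e^{-t^2/2}$ and L\'evy's continuity theorem yields weak convergence to $N(0,1)$; continuity of the Gaussian CDF (P\'olya's theorem) upgrades this to the uniform-in-$x$ statements. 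For Case II ($v<0$), extracting the leading terms of $P_n'(1)/P_n(1)$ and $P_n''(1)/P_n(1)$ from the dominant-root expansion yields $\mathrm{Var}(X_n)=vn+O(1)$, and non-negativity of variance rules out $v<0$.

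For Case III ($v=0$) the quadratic term in $\Lambda$ vanishes, so expansion to third order gives $n\,\Lambda(it/n^\alpha)-iet\,n^{1-\alpha}=O(n^{1-3\alpha})\to 0$ whenever $\alpha>1/3$; hence the characteristic function of $(X_n-en)/n^\alpha$ tends to $1$, the characteristic function of point mass at $0$. In the sub-case where every $b_j(x)$ is constant, the roots $\lambda_i$ and coefficients $a_{i,j}(x)$ do not depend on $n$; consequently $\deg P_n$ is bounded by some $\kappa$, the $X_n$ are supported in $\{0,\ldots,\kappa\}$, and applying the dominant-root asymptotic coefficient-by-coefficient in $x$ gives $\mP(X_n=j)\to [x^j]a_{1,0}(x)/a_{1,0}(1)=:\omega_j$, which is (\ref{1212-1}).

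The technical crux will be the opening step: establishing the dominant-root asymptotic with uniform error on a complex neighborhood of $x=1$ and verifying $a_{1,0}(1)\neq 0$, because the multiplicity pattern $m_j(x)$ and the modulus-ordering of the roots of $F(x,\lambda)$ can shift near $x=1$ even though $\lambda_1$ itself is simple there. Once that foundation is secured, the remaining arguments are routine Taylor expansion of $\Lambda$ combined with L\'evy's continuity theorem.
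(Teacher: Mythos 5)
Your proposal is correct and follows essentially the same route as the paper: isolate the simple dominant root $\lambda_1(x)$ via continuity of roots and the implicit function theorem, write $P_n(x)\sim a(x)\lambda_1(x)^n$, Taylor-expand $\log\big(\lambda_1/D\big)$ near $x=1$ to identify $e$ and $v$ as the per-step mean and variance, and conclude with L\'evy's continuity theorem (plus the third-order expansion for $v=0$ and the polynomial $a(x)$ for the constant-coefficient subcase). The only substantive deviation is Case II, where you rule out $v<0$ by computing $\mathrm{Var}(X_n)=vn+O(1)$ directly from the dominant-root expansion, whereas the paper runs the same characteristic-function computation and derives a contradiction from $|\phi_{Y_n}(t)|\le 1$; both are valid, and your explicit attention to uniformity on a complex neighborhood and to $a(1)\neq 0$ is, if anything, more careful than the paper's real-variable calculation followed by the substitution $t\mapsto it$.
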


\begin{proof}
One arrives at that
\begin{eqnarray}
\label{q-5}
D=\lambda_1(1)> |\lambda_2(1)|\geq |\lambda_3(1)|\geq \cdots |\lambda_r(1)|
\text{ and }  m_1(1)=1.
\end{eqnarray}
By  \cite{NP94} and the smooth of $F$, for $i=1,\cdots,r $,  $\lambda_i(x) $ are continuous.
Thus, for some  $\delta>0$, we have
\begin{eqnarray}\label{wp-11}
\lambda_1(x)> |\lambda_2(x)|\geq |\lambda_3(x)|\geq \cdots |\lambda_r(x)|, \quad\forall x\in (1-\delta,1+\delta).
\end{eqnarray}
For $m_1(x)$ and $\lambda_1(x),$ we have the following fact.

Fact: for some $\delta>0$,    we have $\lambda_1(x)$ is smooth on $(1-\delta,1+\delta)$
and
\begin{eqnarray}
\label{ww-2}
m_1(x)=1, \quad \forall x\in (1-\delta,1+\delta).
\end{eqnarray}
One easily sees that
\begin{eqnarray*}
 F(1,\lambda) = (\lambda-\lambda_1(1))(\lambda-\lambda_2(1))^{m_2(1)}\cdots(\lambda-\lambda_r(1))^{m_r(1)}.
\end{eqnarray*}
and
\begin{eqnarray}
\label{ww-1}
  \frac{\partial F(x,\lambda)}{\partial \lambda}\Big|_{x=1,\lambda=\lambda_1(1)}\neq 0.
\end{eqnarray}
Actually, $\lambda_1(x)$ can be seen an implicit function decided by
 \begin{eqnarray*}
  F(x,\lambda)=0.
 \end{eqnarray*}
 By the  smooth of $F$ and   the implicit function theorem,
     $\lambda_1(x)$ is   smooth on $(1-\eps,1+\eps)$ for some $\eps>0$.
By the smooth  of $F$ and (\ref{ww-1}), for some $\eps>0,$ we have
     \begin{eqnarray*}
   \frac{\partial F(x,\lambda)}{\partial \lambda}\neq 0, \quad \forall x\in (1-\eps,1+\eps),\forall \lambda\in (\lambda_1(1)-\eps,\lambda_1(1)+\eps)
  \end{eqnarray*}
  which yields the desired result (\ref{ww-2}).

Combining (\ref{q-5}),   the general solution to  (\ref{wp-12}) is given by
\begin{eqnarray}
\label{pz-1}
\begin{split}
P_n(x)& = a(x)\lambda_1^n(x)
 \\ &  \quad +\sum_{i=2}^r \lambda^n_i(x)\big(a_{i,0}(x)+a_{i,1}(x)n+\cdots+ a_{i,m_i(x)-1}(x)n^{m_i(x)-1}\big).
 \end{split}
\end{eqnarray}

We consider the following three  different cases.

\textbf{Case}  \textbf{\uppercase\expandafter{\romannumeral1}}: $ v>0$.
 Let  $Y_n=\frac{X_n-en}{\sqrt{vn}}$ and $\phi_{Y_n}(t)=\mE e^{\mathbbm{i}tY_n}$ be the characteristic function of $Y_n$, where $\mathbbm{i}$ is a complex number with $\mathbbm{i}^2=-1$.

 In order to  prove
 \begin{eqnarray*}
  \lim_{n\rightarrow \infty}  \sup_{x\in \mR}\left|\mP(Y_n\leq x)-\int_{-\infty}^x \frac{1}{\sqrt{2\pi}}e^{-\frac{u^2}{2}}\dif u\right|=0,
 \end{eqnarray*}
by  \cite[Chapter 1]{PVV} and  the  continuity theory(\cite[Chapter 15]{Feller}) for characteristic function   in probability,   we only need to prove
  \begin{eqnarray}
  \label{9-1}
 && \lim_{n\rightarrow \infty}\phi_{Y_n}(t)=\lim_{n\rightarrow \infty}\mE e^{\mathbbm{i}\cdot t\frac{X_n-en}{\sqrt{vn}}}=\int_{\mR}e^{\mathbbm{i}\cdot tu}\frac{1}{\sqrt{2\pi}}e^{-\frac{u^2}{2}}\dif u=e^{-\frac{t^2}{2}},\quad \forall t.
\end{eqnarray}
We will give a proof of this.

Let $
  a_n=\frac{1}{\sqrt{v n}},b_n=\sqrt{\frac{e^2n}{v}}
$ and $y=e^{a_nt}$.  {By these definitions, one easily sees that
\begin{eqnarray}
\label{c-1}
\begin{split}
  & \frac{n\lambda_1'(1)}{D}a_n-b_n=ne\frac{1}{\sqrt{v n}}-\sqrt{\frac{e^2n}{v}}=0,
  \\
  & n\frac{\lambda_1'(1)D+\lambda_1''(1)D-\lambda'(1)^2}{2D^2}a_n^2
  =n\cdot \frac{v}{2}\cdot \frac{1}{vn}=\frac{1}{2}.
  \end{split}
\end{eqnarray}}

{By Taylor formula, we have}

 \begin{eqnarray*}
\ln \frac{\lambda_1(y)}{D} &=& \frac{\lambda_1'(1)}{D}(y-1)
+\frac{1}{2D^2}\big[\lambda_1^{''}(1)D-(\lambda_1'(1))^2\big](y-1)^2
+o((y-1)^2),
\end{eqnarray*}
and
\begin{eqnarray*}
y&=&1+a_nt+\frac{a_n^2t^2}{2}+o(a_n^2t^2).
\end{eqnarray*}
Therefore, by $\lim\limits_{n\rightarrow \infty}y=1,$ it holds that
\begin{eqnarray*}
\ln \frac{\lambda_1(y)}{D} &=&\frac{\lambda_1'(1)}{D}\big(a_nt+\frac{1}{2}a_n^2t^2+o(a_n^2t^2)\big)
\\ &&  +\frac{1}{2D^2}\big[\lambda_1^{''}(1)D-(\lambda_1'(1))^2\big]\big(a_nt+\frac{1}{2}a_n^2t^2+o(a_n^2t^2)\big)^2
\\ && +o\Big(\big(a_nt+\frac{1}{2}a_n^2t^2+o(a_n^2t^2)\big)^2\Big).
\\
&=& \frac{\lambda_1'(1)}{D}\big(a_nt+\frac{1}{2}a_n^2t^2\big)
+\frac{1}{2D^2}\big[\lambda_1^{''}(1)D-(\lambda_1'(1))^2\big]a_n^2t^2
+o(\frac{1}{n}),
\end{eqnarray*}
where in the last  equality, we have used  $a_n^2t^2=\frac{t^2}{vn}.$
 By   (\ref{q-5}) and the above equality, we obtain
\begin{eqnarray*}
 && \lim_{n\rightarrow \infty}
 \mE e^{t\frac{X_n-en}{\sqrt{vn}}}
 = \lim_{n\rightarrow \infty}\mE e^{a_ntX_n-b_nt}
 =\lim_{n\rightarrow \infty} \frac{P_n(e^{a_nt})e^{-b_nt}}{P_n(1) }
 = \lim_{n\rightarrow \infty} \frac{P_n(y)e^{-b_nt}}{a(1)  D^n }
 \\&& =\lim_{n\rightarrow \infty}   \frac{\lambda^n_1(x) a(y)}{ a(1) D^n }e^{-b_nt}=\lim_{n\rightarrow \infty} \frac{\lambda^n_1(y) }{D^n }e^{-b_nt}
  =\lim_{n\rightarrow \infty}\exp\{n \ln \frac{\lambda_1(y)}{D}-b_nt\}
   \\ && =\!\lim_{n\rightarrow\infty}\! \exp\!\!\left\{\! \frac{n\lambda_1'(1)}{D}\big(a_nt+\frac{1}{2}a_n^2t^2\big)
\!+\!\frac{n}{2D^2}\big[\!\lambda_1^{''}(1)D-(\lambda_1'(1))^2\big]a_n^2t^2
\!-b_nt+n \! \cdot \!o(\frac{1}{n})\!\right\}
  \\ && =e^{\frac{t^2}{2}},
\end{eqnarray*}
where in the last equality, we have used (\ref{ppp-1}).

In the above equality, we replace $t$ by $\mathbbm{i}t$. Then, we   get
\begin{eqnarray*}
 && \lim_{n\rightarrow \infty}\mE e^{\mathbbm{i}\cdot t\frac{X_n-en}{\sqrt{vn}}}=e^{-\frac{t^2}{2}}
\end{eqnarray*}
which finishes the proof of (\ref{9-1}).

\textbf{
Case}  \textbf{\uppercase\expandafter{\romannumeral2}}: $ v<0$.
Let  $Y_n=\frac{X_n-en}{\sqrt{-vn}}$ and $\phi_{Y_n}(t)=\mE e^{\mathbbm{i}tY_n}$.
Set $
  a_n=\frac{1}{\sqrt{-v n}},b_n=\sqrt{\frac{e^2n}{-v}}
$ and $y=e^{a_nt}$.

{  By Taylor formula, we have
\begin{eqnarray*}
\ln \frac{\lambda_1(y)}{D} &=& \frac{\lambda_1'(1)}{D}\big(a_nt+\frac{1}{2}a_n^2t^2\big)
+\frac{1}{2D^2}\big[\lambda_1^{''}(1)D-(\lambda_1'(1))^2\big]a_n^2t^2
+o(\frac{1}{n}).
\end{eqnarray*}}
{Then, by (\ref{q-5}), we have
\begin{eqnarray*}
 && \lim_{n\rightarrow \infty}
 \mE e^{t\frac{X_n-en}{\sqrt{-vn}}}
 = \lim_{n\rightarrow \infty}\mE e^{a_ntX_n-b_nt}
 =\lim_{n\rightarrow \infty} \frac{P_n(e^{a_nt})e^{-b_nt}}{a(1) D^n }
 = \lim_{n\rightarrow \infty} \frac{P_n(y)e^{-b_nt}}{a(1) D^n }
 \\&& =\lim_{n\rightarrow \infty}   \frac{\lambda^n_1(y) a(y)}{ a(1) D^n }e^{-b_nt}=\lim_{n\rightarrow \infty} \frac{\lambda^n_1(y) }{D^n }e^{-b_nt}
=\lim_{n\rightarrow \infty}\exp\{n \ln \frac{\lambda_1(y)}{D}-b_nt\}
\\  && =\!\lim_{n\rightarrow\infty}\! \exp\!\!\left\{\! \frac{n\lambda_1'(1)}{D}\big(a_nt+\frac{1}{2}a_n^2t^2\big)
\!+\!\frac{n}{2D^2}\big[\!\lambda_1^{''}(1)D-(\lambda_1'(1))^2\big]a_n^2t^2
\!-b_nt+n \! \cdot \!o(\frac{1}{n})\!\right\}
\\ && =e^{-\frac{t^2}{2}}.
\end{eqnarray*}}
In the above equality, we replace $t$ by $\mathbbm{i}t$ and get
\begin{eqnarray}
\label{www-2}
 && \lim_{n\rightarrow \infty}\mE e^{\mathbbm{i}\cdot t\frac{X_n-en}{\sqrt{vn}}}=e^{\frac{t^2}{2}}.
\end{eqnarray}
On the other hand, by the properties of characteristic function, one sees
\begin{eqnarray*}
\big|\mE e^{\mathbbm{i}\cdot t\frac{X_n-en}{\sqrt{vn}}}\big|\leq 1
\end{eqnarray*}
which contradicts with (\ref{www-2}) and gives the desired result.

\textbf{Case} \textbf{\uppercase\expandafter{\romannumeral3}}: $ v=0$.
Let  $Y_n=\frac{X_n-en}{n^{\alpha}},a_n=n^{-\alpha},b_n=en^{1-\alpha}$ and  $y=e^{a_n t}$.

By Taylor formula and $v=0$, we have
\begin{eqnarray*}
\ln \frac{\lambda_1(y)}{D} &=& \frac{\lambda_1'(1)}{D}(y-1)
+\frac{1}{2D^2}\big[\lambda_1^{''}(1)D-(\lambda_1'(1))^2\big](y-1)^2
+O((y-1)^3)
\\ &=& \frac{\lambda_1'(1)}{D}(a_n t+\frac{1}{2}a_n^2t^2)
+\frac{1}{2D^2}\big[\lambda_1^{''}(1)D-(\lambda_1'(1))^2\big]a_n^2t^2
+O(n^{-3\alpha})
\\ &=& \frac{\lambda_1'(1)}{D}a_nt
+O(n^{-3\alpha})= e n^{-\alpha}t
+O(n^{-3\alpha}).
\end{eqnarray*}
Therefore,
\begin{eqnarray*}
 \lim_{n\rightarrow \infty} \exp\{n \ln \frac{\lambda_1(y)}{D}-b_n t\}&=&\lim_{n\rightarrow \infty} \exp\{{n\cdot  e n^{-\alpha}t+n\cdot O(n^{-3\alpha})-en^{1-\alpha}t}\}=1.
\end{eqnarray*}
{Then,  by  (\ref{q-5}), we have}
{\begin{eqnarray*}
 && \lim_{n\rightarrow \infty}
 \mE e^{t\frac{X_n-en}{n^\alpha}}
 = \lim_{n\rightarrow \infty}\mE e^{ta_n X_n-b_nt}
 =\lim_{n\rightarrow \infty} \frac{P_n(y)e^{-b_nt}}{a(1) D^n }
 = \lim_{n\rightarrow \infty} \frac{P_n(y)e^{-b_nt}}{a(1) D^n }
 \\&& =\lim_{n\rightarrow \infty}   \frac{\lambda^n_1(y) a(y)}{ a(1) D^n }e^{-b_nt}=\lim_{n\rightarrow \infty} \frac{\lambda^n_1(y) }{D^n }e^{-b_nt}
  =\!\lim_{n\rightarrow \infty}\exp\{n \ln \frac{\lambda_1(y)}{D}-b_nt\}
\!=1.
\end{eqnarray*}}
In the above equality, we replace $t$ by $\mathbbm{i}t$ and get
\begin{eqnarray*}
 && \lim_{n\rightarrow \infty}\mE e^{\mathbbm{i}\cdot t\frac{X_n-en}{n^\alpha}}=1
\end{eqnarray*}
which  yields  the desired result.

Now we give a proof of   (\ref{1212-1}).
Noting  (\ref{pz-1}), we have
\begin{eqnarray*}
 P_n(x)=a(x)\lambda_1^n+\sum_{i=2}^r (a_{i,0}(x)+a_{i,1}(x)n+\cdots +a_{i,m_i(x)-1}(x)n^{m_i(x)-1})\lambda_i^n,
\end{eqnarray*}
where $m_i(x)=m_i,i=2,\cdots,r$ are constants.
Since $P_n(x)$  is a polynomial of $x$ and  $\lambda_i,i=1,\cdots,r$ are constant, $a(x)$ is also a polynomial of $x$. Assume
\begin{eqnarray*}
  a(x)=\sum_{j=0}^\kappa c_jx^j.
\end{eqnarray*}
then,
\begin{eqnarray*}
  \lim_{n\rightarrow \infty}\mE e^{tX_n}
  =\lim_{n\rightarrow \infty}\frac{P_n(e^t)}{P_n(1)}= \lim_{n\rightarrow \infty}\frac{a(e^t)D^n }{a(1) D^n}=
  \frac{a(e^t)}{a(1)}=\sum_{j=0}^\kappa \omega_je^{jt},
\end{eqnarray*}
where $\omega_j=\frac{c_j}{a(1)}.$
In the above equality, we replace $t$ by $\mathbbm{i}t$ and get
\begin{eqnarray}
\label{www-1}
 && \lim_{n\rightarrow \infty}\mE e^{\mathbbm{i}\cdot t X_n}=\sum_{j=0}^\kappa \omega_je^{\mathbbm{i}jt}.
\end{eqnarray}
By  the  continuity theory(\cite[Chapter 15]{Feller}) for characteristic function   in probability,  we obtain   the desired result.
\end{proof}

By (\ref{1212-1}), the condition $v>0$ is  necessary to ensure  the asymptotic normality.
  In the end of this section, we give a remark here to explain the $e,v$ appeared in Theorem  \ref{11-3}.
\begin{Rem}
In  the special case, when  $
P_n(x) = a(x)\lambda_1^n(x)
$,  it holds that
$$\mE X_n= \frac{P_n'(1)}{P_n(1)} =e\cdot n+O(1)
$$
and
$$
Var(X_n)=\frac{P_n''(1)+P_n'(1)}{P_n(1)}-\big(\frac{P_n'(1)}{P_n(1)}\big)^2=v\cdot n+O(1).$$
Since $\lambda_1(x)>\lambda_i(x),i=2,\cdots,k,$  we can expect that for the $P_n(x)$ in
(\ref{pz-1}) and   $
P_n(x) = a(x)\lambda_1^n(x),$ they have the same  asymptotic
mean  and variance when $n$ tends to infinity.
\end{Rem}

\section{The limits for embedding distributions }

In this section, we consider the  limit for embedding distributions of \emph{$H$-linear family  of graphs  with spiders} {$\{G_n^\circ\}_{n=1}^\infty$}. In subsection 3.1, we give a definition of $G_n^\circ$. In subsection 3.2, we briefly describe the production matrix.
Then we give  the limit of   embedding distribution for graph $G_n^\circ$ in subsection 3.3.
Finally, we demonstrate the relation between the limit of crosscap-number distributions and  Euler-genus distributions  in subsection 3.4.

\subsection{$H$-linear family of graphs with spiders}The definition of
\emph{$H$-linear family of graphs  with spiders}, gave by
Chen and Gross \cite{CG19}, is a generalization of  $H$-linear family of graphs introduced by Stahl \cite{SS91}.
Suppose $H$ is a connected graph.Let $U=\{u_1,\cdots,u_s\}$ and
$V=\{v_1,\cdots,v_s\}$ be {two} disjoint subsets of $V(H)$. For $i=1,2,\cdots,$ {suppose $H_i$ is a copy of $H$ }and let $f_i:H\rightarrow H_i$ be an isomorphism. For each $i\geq 1$ and $1\leq j\leq s,$ we set $u_{i,j}=f_i(u_j)$ and $v_{i,j}=f_i(v_j).$ An \emph{$H$-linear family of graphs}, {denoted by ${\cG}=\{{G}_n\}_{n=1}^\infty$, is defined as follows}:
\begin{itemize}
\item ${G}_1=H_1.$
\item ${G}_n$ is constructed by ${G}_{n-1}$ and $H_n$ be amalgamating the vertex $v_{n-1,j}$ of ${G}_{n-1}$ with the vertex $u_{n,j}$ of $H_n$ for $j=1,\cdots,s.$
\end{itemize}
\begin{figure}[h]
  \centering
  \includegraphics[width=0.55\textwidth]{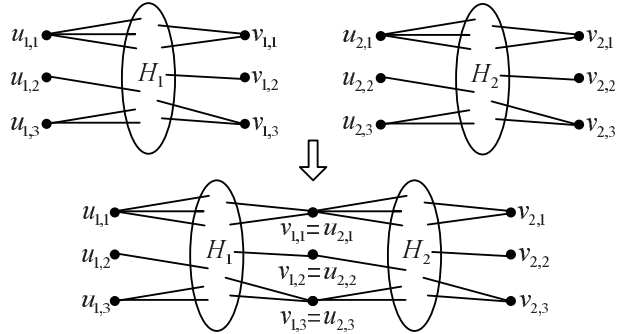}
  \caption{The graph ${G}_{2}$ in a generic $H$-linear sequence}
  \label{xianxing}
\end{figure}
Figure \ref{xianxing} shows an example for the graphs $H_1,$ $H_2$  and $G_2.$

Now, we inroduce the definition of $H$-linear family of graphs with spiders.
For $1\leq j\leq s,$ {let $(J_j,{t_{j,i'}})$ }and $(\overline{J_j},{\overline{t_{j,i'}}})$ be graphs in which $\{t_{j,i'}\},\{\overline{t_{j,i'}}\}$, respectively, are sets of root-vertices.
{For $1\leq i\leq s,$   $I_i$ and $\bar{I}_i$ are  subsets of $\{1,\cdots,s\}.$}
 A graph $\{{G}_n^\circ\}_{n=1}^\infty$is constructed  from ${G}_n$ by  amalgamating the vertex $u_{1,i}$ of $G_n$ with the vertex $t_{j,i'}$ of $J_j$ { for $j\in I_i$}, and amalgamating the vertex $v_{n,i}$ of ${G}_n$ with $\overline{t_{j,i'}}$ of $\overline{J_j}$ { for $j\in \bar{I}_i.$ }  The graphs  $(J_j,{t_{j,i'}})$ and $(\overline{J_j},{\overline{t_{j,i'}}})$ are called \emph{spiders }for the sequence ${\cG}.$ The resulting sequence of graphs is said to be an \emph{$H$-linear family of graphs with spiders} and is denoted by ${\cG}^o$.
We call  $\cG^{\circ}$ \mdef{ring-like}, if there  is a spider among $J_1, J_2, \ldots, J_s$ that coincide with a spider among $\ov J_1, \ov J_2, \ldots, \ov J_s.$
{ The graphs in Figure \ref{lizi} demonstrate an example of
\mdef{ring-like}, in which $s=1$ and $\bar{J}_1=J_1.$ }

\begin{figure}[h]
  \centering
  \includegraphics[width=0.70\textwidth]{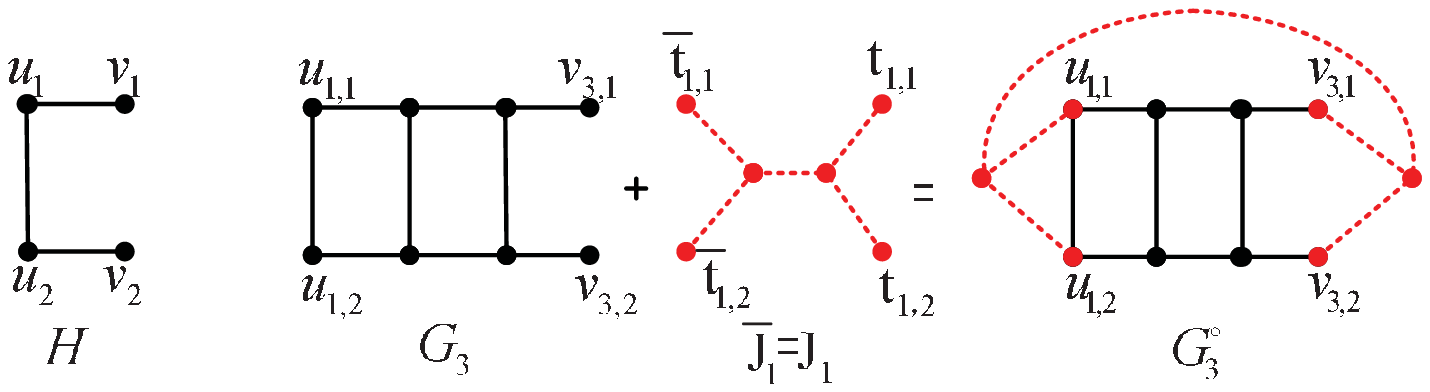}
  \caption{Using a spider to construct  ${G}^\circ_3$ }
  \label{lizi}
\end{figure}

\subsection{Production matrix} By permutation-partition pairs, Stahl \cite{SS91} showed that the calculation of genus polynomial of $G_n$ can be converted to  a (transfer) matrix  method.
Such matrices are also called {production matrices} \cite{GKMT18} or transfer matrix \cite{Moh15} (using different techniques and methods).
 Here we briefly describe the production matrix of { $\cG$} (or { $\cG^\circ $}).  For more details on this,  see \cite{Moh15,GKMT18} etc. We refer to \cite{CJ94} for face-tracing algorithm.


We suppose that there are $k$ embedding types for the graph $G_n$ with roots $u_{1,1}, u_{1,2},\ldots,u_{1,s},$ $v_{n,1},v_{n,2},$
$\ldots,v_{n,s}$. For $1\leq j\leq k,$ let $\gamma_k^j(G_n)$ be the number of embeddings of $G_n$ in $O_k$ with type $j$
and
\begin{eqnarray*}
  \Gamma_{G_n}^j(x)=\sum_{i\geq 0}\gamma_i^j(G_n)x^i.
\end{eqnarray*}
From the definition $H$-linear family of graphs and face-tracing algorithm, we obtain
\begin{eqnarray}
\label{q-2}
  ( \Gamma_{G_n}^1(x), \Gamma_{G_n}^2(x),\cdots, \Gamma_{G_n}^k(x))^T=M(x)\cdot ( \Gamma_{G_{n-1}}^1(x), \Gamma_{G_{n-1}}^2(x),\cdots, \Gamma_{G_{n-1}}^k(x))^T.\notag\\
\end{eqnarray}
where { $\alpha^T$  is the transpose of the  vector $\alpha$ }and
 \begin{eqnarray*}
   M(x)=  \left[\begin{matrix}
   m_{1,1}(x) & m_{1,2}(x) &\cdots &m_{1,k}(x)
     \\    m_{2,1}(x) & m_{2,2}(x) &\cdots &m_{2,k}(x)
     \\    \vdots  & \vdots &\ddots&\vdots
     \\  m_{k,1}(x) & m_{k,2}(x) &\cdots &m_{k,k}(x)
   \end{matrix}
   \right]
 \end{eqnarray*}
 is  the \emph{transfer matrix} \cite{Moh15} or \textit{\textit{production matrix} \cite{GKMT18} of {genus distribution} }of { $\cG$ (or $\cG^{\circ}$}). In \cite{GKMT18}, the authors showed that  the production matrix  $M(x)$ can been calculated with a computer program.

Let
\begin{eqnarray*}
  V_{G_n}(x)=( \Gamma_{G_n}^1(x), \Gamma_{G_n}^2(x),\cdots, \Gamma_{G_n}^k(x))^T,
\end{eqnarray*}
another property for the genus polynomial of  $H$-linear sequence with spiders is that there exists a $k$-dimensional row-vector $$
V=(v_1(x),\cdots,v_k(x))
$$
such that {$\Gamma_{G_n^\circ}(x)=V\cdot V_{G_n}(x). $ Note  that if there are no spiders, that is $\cG^\circ=\cG$, then $V=(1,1,\cdots,1).$

For the case of  Euler-genus polynomials,
 there also has the  \textit{production matrix } \textit{of Euler-genus distribution} of  $\cG$ (or $\cG^{\circ}$). We take  \cite{CG18} as an   example of this.

\begin{example}Suppose $P_n$ is the path graph on $n$ vertices. An \textit{ladder graph} $L_n$ is obtained by taking the graphical cartesian product of the path graph $P_{n}$ with $P_2$, i.e. $L_n=P_{n}\Box P_2.$ From Section 3 in \cite{CG18}, the $2\times 2$ production matrix of Euler-genus distribution of the ladder graph is  given by
\begin{eqnarray*}
 M(x)=\left[
                \begin{array}{cc}
                  2 & 4 \\
                  2x+4x^2 & 4x \\
                \end{array}
              \right].
\end{eqnarray*}
It follows that
\begin{eqnarray*}
M(1)=\left[
                \begin{array}{cc}
                  2 & 4 \\
                  6 & 4 \\
                \end{array}
              \right].
\end{eqnarray*}
\end{example}

The \emph{maximum genus}, \emph{maximum non-orientable genus} and \emph{maximum Euler-genus}  of a graph $G$, denoted by $\gamma_{\max}(G),$ $\tilde{\gamma}_{\max}(G)$ and $\eps_{\max}(G),$ respectively, are given by $\gamma_{\max}(G)=\max \{i| \gamma_i(G)>0\},  \tilde{\gamma}_{\max}(G)=\max \{i| \tilde{\gamma}_i(G)>0\} \text{ and } \eps_{\max}(G)=\max \{i| \eps_i(G)>0\}$ respectively. One can see that $\eps_{\max}(G)=\max\{2\gamma_{\max}(G),\tilde{\gamma}_{\max}(G)\}.$ A \textit{cactus graph},  also called a \textit{cactus tree}, is a connected graph in which any  two graph  cycles have no vertex in common. Recall that a graph $G$ with orientable maximum genus $0$ if and only if $G$ is the cactus graph, and a graph $H$ of { maximum Euler-genus} $0$ if and only if $H$ is homeomorphic to  the path graph $P_n$ on $n$ vertices for $n>1$.

{The following two basic properties are followed by the definition of $H$-linear family  of graphs with spiders.
\begin{prop}\label{1}
For $1\leq j\leq k,$  $\dsum_{i=1}^{k}m_{ij}(1)$ are all the same constant $D$.
Moreover, for any $n\geq 2,$ we have  $\frac{P_n(1)}{P_{n-1}(1)}=D,$ where $P_n(x)=\Gamma_{G_n^\circ }(x)$
or $\cE_{G_n^\circ}(x).$
\end{prop}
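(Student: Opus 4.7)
The plan is to establish both parts of the proposition by combining a combinatorial reading of the production matrix with the standard counts
\[
\Gamma_G(1)=\prod_{v\in V(G)}(\deg_G(v)-1)!\quad\text{and}\quad \cE_G(1)=2^{|E(G)|-|V(G)|+1}\prod_{v\in V(G)}(\deg_G(v)-1)!
\]
for the total numbers of pure rotation systems and of $T$-rotation systems, respectively.

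For the column-sum statement, I would read off the combinatorial meaning of $m_{ij}(1)$ from the face-tracing derivation of (\ref{q-2}): it is the number of type-$i$ embeddings of $G_n$ obtained by amalgamating $H_n$ onto a single fixed type-$j$ embedding of $G_{n-1}$. Summing on $i$ therefore counts every extension of a fixed type-$j$ embedding of $G_{n-1}$ to an embedding of $G_n$. Such an extension is specified by (a) a rotation at each non-root vertex of $H_n$ (and, for Euler-genus, a twist on each non-tree edge of $H_n$), and (b) at each amalgamation vertex $v_{n-1,j}=u_{n,j}$, an interleaving of the cyclic order inherited from $G_{n-1}$ with the cyclic order contributed by $H_n$. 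Both counts depend only on $H$ and on the degrees $a_j=\deg_H(u_j)$, $b_j=\deg_H(v_j)$; neither depends on which type-$j$ embedding of $G_{n-1}$ is chosen. An explicit enumeration gives
\[
D=\prod_{v\in V(H)\setminus(U\cup V)}(\deg_H(v)-1)!\cdot\prod_{j=1}^s(a_j+b_j-1)!,
\]
multiplied by $2^{|E(H)|-|V(H)|+s}$ in the Euler-genus case.

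For the ratio $P_n(1)/P_{n-1}(1)$, I would compute it directly from the degree-factorial formula above. Going from $G_{n-1}^\circ$ to $G_n^\circ$ moves each $\overline{J_j}$-spider from $v_{n-1,i}$ to $v_{n,i}$ and inserts $H_n$ by identifying $u_{n,j}$ with $v_{n-1,j}$. All vertices common to both graphs---interior vertices of every previous $H_k$, the $J_j$-spiders at the $u_{1,i}$'s, and the internal vertices of the $\overline{J_j}$-spiders---keep their degrees and therefore cancel in the ratio. What remains are: the degree change at $v_{n-1,j}$ from $b_j+\sigma_j$ (where $\sigma_j$ is the total degree contributed at that vertex by its attached $\overline{J}$-spiders) to $b_j+a_j$ after amalgamation; the factor $(b_j+\sigma_j-1)!$ from the new vertex $v_{n,j}$ with its reattached spider; and $\prod_{v\in V(H)\setminus(U\cup V)}(\deg_H(v)-1)!$ from the interior vertices of $H_n$. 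The two $(b_j+\sigma_j-1)!$ factors cancel, leaving precisely the expression for $D$ from the previous paragraph. The Euler-genus case is identical up to the extra factor $2^{|E(H)|-|V(H)|+s}$ on both sides.

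The main technical obstacle is verifying that the interleaving count at each amalgamation vertex depends only on the two degrees, so that the extension count is genuinely independent of the source type, and that in the Euler-genus case the $T$-rotation-equivalence reduction contributes the stated power of two uniformly over source types. Once these points are in hand, both assertions of the proposition reduce to careful bookkeeping of factorials.
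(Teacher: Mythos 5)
Your argument is correct. The paper itself offers no proof of this proposition --- it is simply asserted to ``follow from the definition'' --- so your counting argument supplies exactly the justification the authors leave implicit: the column sum $\sum_i m_{ij}(1)$ is the number of ways to extend a fixed type-$j$ embedding across the amalgamation, and the interleaving count $(a_j+b_j-1)!/(b_j-1)!$ at each merged vertex depends only on the two degrees, so the sum is independent of $j$; the identity $P_n(1)/P_{n-1}(1)=D$ then follows from the degree-factorial formulas $\Gamma_G(1)=\prod_v(\deg_G(v)-1)!$ and $\cE_G(1)=2^{|E|-|V|+1}\prod_v(\deg_G(v)-1)!$ by the cancellation you describe (with the factor $2^{|E(H)|-|V(H)|+s}$ accounting for the new co-tree edges in the Euler-genus case). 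Your direct computation of the ratio is in fact the right move for the spider case, since with spiders the vector $V(1)$ need not be all-ones and the ratio does not follow formally from the column-sum property alone.
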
}

\begin{Rem}\label{rem:D}
In the proof of Theorem \ref{11-3a} below, we will see $D$ have the same meaning as that appeared in Section 2, so we use the same notation.
\end{Rem}

\begin{prop}\label{M:constant:G}
Suppose $M(x)$ is the production matrix of genus distribution (Euler-genus distribution) of { $\cG$}, then $M(x)$  is a constant if and only if the maximum genus (maximum Euler-genus) of $G_n$ equals { $0, \forall n\in \mN.$}
\end{prop}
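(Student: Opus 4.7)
The proof will rest on the combinatorial meaning of the entries of the production matrix. By the face-tracing construction underlying $M(x)$, each nonzero entry has the form $m_{ij}(x)=\sum_{\ell}c_{\ell}\,x^{\Delta_\ell}$, where the sum ranges over the face-tracing configurations for adjoining a fresh copy $H_n$ that take a type-$j$ embedding of $G_{n-1}$ to a type-$i$ embedding of $G_n$, and the exponent $\Delta_\ell\ge 0$ records the increase in genus (respectively, Euler-genus) produced by that configuration. Consequently $M(x)$ is a constant matrix if and only if every such configuration satisfies $\Delta_\ell=0$, i.e., no amalgamation step ever raises the genus.

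For the direction ``$\gamma_{\max}(G_n)=0$ for all $n$ $\Rightarrow$ $M(x)$ is constant'': under the hypothesis every embedding of every $G_n$ is a sphere embedding, so each component $\Gamma^j_{G_n}(x)$ of $V_{G_n}(x)$ is a nonnegative constant. I would choose $n$ large enough that every embedding type is realized with $\Gamma^j_{G_{n-1}}(1)>0$ for each $j\in\{1,\dots,k\}$. Reading (\ref{q-2}) in the $i$-th coordinate,
\begin{equation*}
\Gamma^i_{G_n}(x)\;=\;\sum_{j=1}^{k}m_{ij}(x)\,\Gamma^j_{G_{n-1}}(x),
\end{equation*}
the left-hand side is constant in $x$, while the right-hand side is a sum of nonnegative polynomials with strictly positive coefficients; no cancellation can occur, so every $m_{ij}(x)$ must itself be constant.

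For the converse ``$M(x)$ constant $\Rightarrow \gamma_{\max}(G_n)=0$ for all $n$'': constancy of $M(x)$ rules out any genus-raising face-tracing configuration when $H_n$ is amalgamated to $G_{n-1}$. Applying this to the basic building block of $M(x)$, namely a single copy of $H$ together with its interface rotations, forces $H$ itself to be a cactus (in the orientable-genus case) or a path (in the Euler-genus case, using the characterization recalled in the text). Since vertex amalgamation of cacti at shared vertices produces another cactus, and amalgamation of paths at endpoints produces another path, an induction on $n$ delivers $\gamma_{\max}(G_n)=0$ (respectively, $\eps_{\max}(G_n)=0$) for every $n\in\mN$. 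The main obstacle is precisely this converse step: extracting structural information about $H$ itself from the constancy of $M(x)$. It requires opening up the face-tracing recipe for $m_{ij}(x)$ and observing that any non-cactus (respectively, non-path) structure in $H$ would necessarily manifest as a positive-exponent term in some entry of $M(x)$, contradicting constancy.
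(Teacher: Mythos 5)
The paper offers no proof of this proposition --- it is announced as one of ``two basic properties [that] are followed by the definition'' --- so your proposal can only be judged on its own terms. Your forward direction is sound in outline: if every $G_n$ embeds only in the sphere, every realized face-tracing configuration has genus increment $0$, so the corresponding terms of $m_{ij}(x)$ carry exponent $0$. The one soft spot is your appeal to ``$n$ large enough that every embedding type is realized'': some types may never be realized for any $n$, in which case the recursion (\ref{q-2}) places no constraint on the corresponding columns of $M(x)$, and your cancellation argument says nothing about them. Either justify realizability of all types or argue directly from the combinatorial definition of $m_{ij}(x)$ (each monomial records the increment of an actual pair of embeddings, and such a pair exists only when type $j$ occurs).

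The converse is where the proposal genuinely breaks. Your inductive step ``vertex amalgamation of cacti at shared vertices produces another cactus'' is false as soon as $s\ge 2$: amalgamating a triangle $(a,b,c)$ with a single edge along the two vertices $a,b$ yields a triangle with a doubled edge, which has an edge lying on two cycles and maximum genus $1$. So even granting that $H$ is a cactus, the $G_n$ need not be, and your induction does not close. The detour through cactus structure is in fact unnecessary: if $M(x)$ is constant, then $V_{G_n}(x)=M^{\,n-1}V_{G_1}(x)$ with $M$ a nonnegative constant matrix, so the top degree cannot grow and $\gamma_{\max}(G_n)=\gamma_{\max}(G_1)=\gamma_{\max}(H)$ for all $n$. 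Everything therefore reduces to the single claim that constancy of $M(x)$ forces $\gamma_{\max}(H)=0$ --- precisely the step you yourself label ``the main obstacle'' and then leave as an assertion about ``opening up the face-tracing recipe.'' As it stands that crucial implication is not proved (one would argue, e.g., that an embedding of $G_2$ restricted to the subgraph $H_2$ has genus no larger than that of the ambient embedding, so a positive-genus embedding of $H$ produces a positive increment over a minimum-genus embedding of $G_1$, forcing a positive power of $x$ in some entry). Until that is supplied, the backward direction is incomplete, and the cactus-amalgamation argument that you do supply is incorrect for $s\ge 2$.
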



\subsection{The limits for embedding  distributions of $H$-linear families of graphs with spiders}

A square matrix $A =(a_{i,j})_{i,j=1}^k$ is said to be non-negative   if
\begin{eqnarray*}
  a_{i,j}\geq 0,\quad \forall i,j=1,\cdots,k.
\end{eqnarray*}
  Let $A$ be a non-negative $k\times k$ matrix with maximal eigenvalue $r$ and suppose that $A$ has exactly $h$ eigenvalues of modulus $r.$ The number $h$ is called the {\textit{index}} of imprimitivity of $A.$ If $h=1,$ the matrix $A$ is said to be \emph{primitive}; otherwise, it is imprimitive.
  A square matrix $A =(a_{i,j})_{i,j=1}^k$ is said to a  stochastic matrix  if
 \begin{align*}
   \sum_{i=1}^ka_{ij}=1,  j=1,\cdots,k.
 \end{align*}

{
The following property of primitive stochastic matrix can be found in Proposition 9.2 in \cite{Beh}.
\begin{prop}\label{eige}
Every eigenvalue $\lambda$ of a stochastic matrix $A$  satisfies $|\lambda|\leq1.$ Furthermore, if the stochastic matrix $A$ is primitive, then all other eigenvalues of modulus are less than $1$, and algebraic multiplicity of $1$ is one.
\end{prop}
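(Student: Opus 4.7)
The plan is to deduce the statement from the Perron--Frobenius theorem for non-negative matrices, after first checking the universal bound $|\lambda|\leq 1$ directly.

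First I would establish the bound $|\lambda|\leq 1$ for every eigenvalue. Because $A$ is column-stochastic, $\mathbf{1}^{T}A=\mathbf{1}^{T}$, which is equivalent to saying the induced operator $1$-norm $\|A\|_1=\max_j\sum_i|a_{ij}|$ equals $1$. Since the spectral radius is bounded above by any induced matrix norm, this already gives $|\lambda|\le \|A\|_1=1$ for every eigenvalue $\lambda$. (Alternatively, one can argue dually: if $Av=\lambda v$ with left eigenvector language, pick the coordinate $i_0$ with $|v_{i_0}|$ maximal and use $|\lambda||v_{i_0}|=|\sum_j a_{i_0 j}v_j|\leq \sum_j a_{i_0 j}|v_{i_0}|\leq |v_{i_0}|$ after passing to $A^{T}$, which is row-stochastic.) This handles the first assertion.

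Second I would show that $1$ is actually attained as an eigenvalue. Since $A$ is column-stochastic, $\mathbf{1}^{T}A=\mathbf{1}^{T}$, so $\mathbf{1}$ is a left eigenvector of $A$ with eigenvalue $1$; equivalently $1$ is an eigenvalue of $A^{T}$, hence of $A$. Combined with the previous step, this identifies $1$ with the spectral radius of $A$.

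Third, and this is where primitivity is essential, I would invoke the Perron--Frobenius theorem in its primitive form: if $B\geq 0$ is primitive then its spectral radius $r$ is a simple eigenvalue (both geometrically and algebraically) and is the \emph{unique} eigenvalue of modulus $r$; every other eigenvalue satisfies $|\mu|<r$. Applying this to our primitive matrix $A$ with $r=1$ immediately yields both remaining conclusions: the algebraic multiplicity of $1$ equals one, and every other eigenvalue has modulus strictly less than $1$. The main obstacle is not in the logic of this plan, which is standard, but in deciding how much of Perron--Frobenius to cite versus reprove; since the paper explicitly cites \cite{Beh} Proposition~9.2, I would be content to quote the primitive Perron--Frobenius theorem and simply verify the stochastic hypotheses reduce the general statement to the claim above.
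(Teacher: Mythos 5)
Your proof is correct and matches the paper's treatment: the paper gives no argument of its own but simply cites Proposition~9.2 of \cite{Beh}, and your derivation (spectral radius bounded by the induced $1$-norm, $\mathbf{1}^{T}A=\mathbf{1}^{T}$ exhibiting the eigenvalue $1$, then the primitive form of Perron--Frobenius) is exactly the standard argument that citation stands in for. Note only that the paper's stochastic matrices are column-stochastic, which you have correctly accounted for by working with left eigenvectors.
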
}

\begin{thm}
\label{11-3a}
Consider the sequence of graphs $\cG^\circ=\{G_n^\circ\}_{n=1}^\infty$.
Let  $\{p_i(n), i=0,1\cdots,\}$
 be the  genus polynomial (Euler-genus polynomial) of  graph   $G_n^\circ$,
 $P_n(x)$ be the genus polynomial (Euler-genus polynomial) of $G_n^\circ$ and $M(x)$ be the production matrix for $\cG^\circ$.
If  the matrix   $M(1)$ is primitive,
then  the results of Theorem  \ref{11-3} hold.  Furthermore, if $M(x)$ is a constant,
  then the limit of the law for embedding distributions of $G_n^\circ$ is  a  discrete distribution.
\end{thm}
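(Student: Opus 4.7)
The plan is to reduce Theorem \ref{11-3a} to the central limit theorem of Section 2 by extracting a scalar linear recurrence from the matrix recursion $V_{G_n}(x) = M(x) V_{G_{n-1}}(x)$ of (\ref{q-2}), and then verifying the multiplicity-1 hypothesis at $x = 1$ using the Perron--Frobenius-type structure that primitivity of $M(1)$ forces.

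First I would apply the Cayley--Hamilton theorem to $M(x)$. Writing the characteristic polynomial as $\det(\lambda I - M(x)) = \lambda^k - c_1(x)\lambda^{k-1} - \cdots - c_k(x)$, Cayley--Hamilton gives $M(x)^k = \sum_{j=1}^k c_j(x) M(x)^{k-j}$. Iterating (\ref{q-2}) and contracting with the row vector $V$ that realizes $\Gamma_{G_n^\circ}(x) = V \cdot V_{G_n}(x)$ (with an entirely analogous setup for the Euler-genus production matrix), I obtain
\begin{equation*}
P_n(x) = c_1(x) P_{n-1}(x) + c_2(x) P_{n-2}(x) + \cdots + c_k(x) P_{n-k}(x),
\end{equation*}
which is exactly the recurrence (\ref{wp-12}) with $b_j(x) = c_j(x)$. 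Consequently the polynomial $F(x,\lambda)$ from (\ref{555-1}) coincides with $\det(\lambda I - M(x))$, and the roots $\lambda_i(x)$ in (\ref{4-1}) are precisely the eigenvalues of $M(x)$.

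Next I would verify the multiplicity hypothesis of Theorem \ref{11-3} at $x = 1$. By Proposition \ref{1}, every column of $M(1)$ sums to $D$, so $D^{-1}M(1)$ is column-stochastic. Primitivity of $M(1)$ transfers to $D^{-1}M(1)$, and Proposition \ref{eige} then gives that $1$ is a simple eigenvalue of $D^{-1}M(1)$ strictly dominating all others in modulus. Rescaling, $D$ is the unique maximal-modulus eigenvalue of $M(1)$ with algebraic multiplicity $1$. In the notation of Theorem \ref{11-3} this says $\lambda_1(1) = D$ and $m_1(1) = 1$, which is exactly the hypothesis required. The three cases of Theorem \ref{11-3} therefore transfer verbatim to the random variable $X_n$ associated with the embedding distribution of $G_n^\circ$.

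For the ``Furthermore'' part, suppose $M(x)$ is a constant matrix. Then $F(x,\lambda)$ does not depend on $x$, so the coefficients $b_j(x)$ of the recurrence are all constants, and the dominant eigenvalue satisfies $\lambda_1(x) \equiv D$ independently of $x$. Hence $\lambda_1'(1) = \lambda_1''(1) = 0$, and (\ref{ppp-1}) yields $e = v = 0$. This places us in Case III of Theorem \ref{11-3}, whose furthermore clause applies precisely because the $b_j$ are constant and delivers the claimed discrete limit law (\ref{1212-1}). The only genuine obstacle in the argument is the translation step from the matrix formulation of Subsection 3.2 to the scalar recurrence of Section 2 --- once the Cayley--Hamilton reduction and the stochastic-matrix conversion via Propositions \ref{1} and \ref{eige} are in hand, everything reduces to a direct invocation of Theorem \ref{11-3}.
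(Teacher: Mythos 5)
Your proposal is correct and follows essentially the same route as the paper: reduce to the scalar recurrence (\ref{wp-12}) whose coefficients are those of the characteristic polynomial of $M(x)$, verify $m_1(1)=1$ and $\lambda_1(1)=D$ via Proposition \ref{1} (column sums) and Proposition \ref{eige} applied to $D^{-1}M(1)$, and then invoke Theorem \ref{11-3}, with Case III handling the constant-matrix case. The only cosmetic difference is that you derive the scalar recurrence explicitly by Cayley--Hamilton, whereas the paper cites \cite{CG19} and \cite{CG18} for that step.
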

\begin{proof}

Suppose  that the characteristic polynomial of the production matrix $M(x)$ is
\begin{align*}
\label{55-1}
F(x,\lambda)&= \lambda^k-b_1(x)\lambda^{k-1}-b_2(x)\lambda^{k-2}\cdots-b_{k-1}(x)\lambda-b_k(x),
\end{align*}
{where  $b_j(x)$$(1\leq j \leq k)$ are polynomials with   integer coefficients}.
We also assume  $F(x,\lambda)=(\lambda-\lambda_1(x){)}^{m_1(x)}\cdots (\lambda-\lambda_r(x)^{m_r(x)}.$
Then, by the results in  \cite{CG19} (\cite{CG18}),  the sequence of genus polynomials (Euler-genus polynomial) of  $H$-linear family of graphs with spiders $G_n^\circ$ satisfy the following $k^{th}$-order linear recursion
\begin{eqnarray*}
\label{30-2}
  P_{n}(x)=b_1(x)P_{n-1}(x)+b_2(x)P_{n-2}(x)+\cdots+
  b_k(x)P_{n-k}(x).
\end{eqnarray*}

By Proposition \ref{1} and Proposition \ref{eige}, if $M(1)$ is primitive,
we have
\begin{eqnarray*}
D=\lambda_1(1)> |\lambda_2(1)|\geq |\lambda_3(1)|\geq \cdots |\lambda_r(1)|
\text{ and }  m_1(1)=1.
\end{eqnarray*}

 For any $n\in \mN,$  we denote the  embedding distribution of  graph   $G_n^\circ$  by $\{p_i(n),i=0,1,2,\cdots,\}$   and let    $X_n$ be  a random variable with distribution
\begin{eqnarray*}
\label{10-1}
\mP(X_n=i)=\frac{p_i(n)}{P_n(1)}, \quad i=0,1,\cdots,
\end{eqnarray*}
and
\begin{eqnarray}
\label{pp-1}
  e=\frac{\lambda_1'(1)}{D},\quad  v=\frac{-\big(\lambda_1'(1)\big)^2 +D\cdot  \lambda_1^{''}(1)
 +D\cdot\lambda_1'(1) }{D^2}.
\end{eqnarray}
So following the lines in the proof of Theorem \ref{11-3},
 we finish our proof. Furthermore if $M(x)$ is a constant, then all these  functions   $b_1(x),\cdots,b_k(x)$ are constant.
 Noting the  case \uppercase\expandafter{\romannumeral3}  in Theorem \ref{11-3}, this  theorem follows.
\end{proof}
The primitive of   the  matrix $M(1)$ is very important in our proof.
For this, we give the following example.

\begin{example}
\label{q-3}
Let
\begin{eqnarray*}
M(x)=
\left[
\begin{matrix}
  x+1&0\\
  0&2x
\end{matrix}
\right].
\end{eqnarray*}
$M(1)$ is imprimitive.
By calculation,  we obtain
\begin{eqnarray*}
\lambda_1(x)=\frac{3x+1+|x-1|}{2},\quad \lambda_2(x)=\frac{3x+1-|x-1|}{2}.
\end{eqnarray*}
In this case, we even don't have the   differentiability  of   $\lambda_1(x)$ at $x=1.$
\end{example}

\begin{Rem}
As pointed by Stahl \cite{SS91},  in all known cases, the production matrix $M(x)$ for the genus distributions of
any $H$-linear family of graphs is primitive at $x=1.$ Currently, we don't know whether this is true for general (or most) linear families of graphs.
\end{Rem}

In the rest of this subsection, we apply Theorem \ref{11-3a} to
path-like  and ladder-like sequences of graphs.

A vertex with degree $1$ is called a \textit{pendant vertex}, and the edge incident with that vertex is called a \textit{pendant edge.} If a pendant vertex $u$ of a graph $G$ is chosen to be a root, then the vertex $u$ is called a \textit{pendant root}. Suppose $(H,u,v)$ is a connected graph with two pendant roots $u,v.$ For $i=1,2,\ldots,n,$ let $(H_i,u_i,v_i)$ be a copy of $(H,u,v).$ By the way in subsection 3.1,  we construct a  $(H,u,v)$-linear family of graphs and a  $(H,u,v)$-linear family of graphs with spiders $(J_1,t_{1,i'})$ and $(\overline{J_1},{\overline{t_{1,i'}}}),$  where $(J_1,{t_{1,i'}})$ }and $(\overline{J_1},{\overline{t_{1,i'}}})$ be two connected graphs with roots $t_{1,i'}, \overline{t_{1,i'}}$ respectively.
For the $(H,u,v)$-linear family of graphs  with spiders or not, they have the same production matrix. Therefore,  we use the same notation $\{P_{n}^{H}\}_{n=1}^{\infty}$ to denote them.
For convenience, we call the graph $P_{n}^{H} $ \textit{path-like}.
 Figure  \ref{path}  demonstrate the graphs  $H$ and $P_{n}^{H} $, the shadow part of $H$ can be any connected graph.
\begin{figure}[h]
  \centering
  \includegraphics[width=0.75\textwidth]{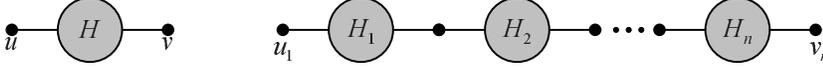}
  \caption{  Graph $H$ (left), and path-like graph $P^H_n$ (right)}
  \label{path}
\end{figure}
\begin{cor}
The genus distributions (Euler-genus distributions) of the path-like sequence  of graphs $ \{P_n^H\}_{n=1}^\infty$ with spiders  are
asymptotic normal distribution if the maximum genus  (maximum Euler-genus) of $(H,u,v)$ is greater than $0.$
\end{cor}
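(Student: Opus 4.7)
The plan is to reduce the corollary to Theorem \ref{11-3a}. By construction, the path-like sequence $\{P_n^H\}_{n=1}^\infty$ with (or without) spiders is a special case of an $H$-linear family of graphs with spiders (with two pendant roots, so $s=1$), and therefore its genus polynomial (resp.\ Euler-genus polynomial) satisfies the $k^{th}$-order linear recurrence associated with a production matrix $M(x)$. To conclude asymptotic normality it suffices to verify two hypotheses: (i) $M(1)$ is primitive, so that Theorem \ref{11-3a} is applicable; and (ii) the parameter $v$ defined in (\ref{pp-1}) is strictly positive, so that we land in Case I of Theorem \ref{11-3} rather than in the degenerate Case III.

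For (i), I would inspect the production matrix for the path-like family directly. Each embedding ``type'' of $P_n^H$ relative to its two root vertices $u_{1,1}$ and $v_{n,1}$ is a combinatorial pattern near the roots, and amalgamating a further copy of $H$ (which is connected and contains both pendant roots) mixes these types thoroughly: any type can be reached from any other after a bounded number of amalgamations. Hence $M(1)^N$ has strictly positive entries for some $N$, which is equivalent to primitivity. This is also in line with Stahl's empirical observation recorded in the Remark after Example \ref{q-3}.

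For (ii), the hypothesis $\gamma_{\max}(H,u,v)>0$ (resp.\ $\eps_{\max}(H,u,v)>0$) gives, by Proposition \ref{M:constant:G}, that $M(x)$ is not constant. Consequently $\lambda_1(x)$ is a nonconstant smooth function near $x=1$, and monotonicity of the Perron--Frobenius eigenvalue with respect to the entries of a nonnegative matrix yields $\lambda_1'(1)>0$. The Remark following Theorem \ref{11-3a} identifies $v\cdot n$ as the leading order of the variance of $X_n$, so $v\geq 0$ automatically; what remains is to rule out $v=0$.

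The main obstacle I anticipate is the strict inequality $v>0$, established without an explicit case-by-case computation of $M(x)$. The cleanest route is by contradiction: if $v=0$, then by Case III of Theorem \ref{11-3} the normalized random variable $\tfrac{X_n-e\cdot n}{n^\alpha}$ concentrates at a single point for every $\alpha>\tfrac{1}{3}$, forcing the spread of attained genera across embeddings of $P_n^H$ to be of order $o(n^\alpha)$ for some $\alpha<1$. On the other hand, since $\gamma_{\max}(H,u,v)>0$, within each of the $n$ amalgamated copies of $H$ one can locally switch a rotation to produce an embedding whose genus differs from a reference embedding by a nonzero amount; combining independent choices across the $n$ copies produces embeddings of $P_n^H$ whose genera range over an interval of length $\Theta(n)$. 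This contradicts the concentration derived from $v=0$, so $v>0$. Case I of Theorem \ref{11-3}, invoked through Theorem \ref{11-3a}, then yields the asserted asymptotic normality of the genus (resp.\ Euler-genus) distributions.
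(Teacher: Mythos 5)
Your high-level plan (reduce to Theorem~\ref{11-3a}, then check a simple dominant eigenvalue at $x=1$ and $v>0$) is the same as the paper's, but both verification steps as you argue them have genuine gaps, and both are resolved in the paper only by actually computing the production matrix --- which your proposal never does. The paper partitions the embeddings of $(H,u,v)$ by whether the two roots lie on the same face or on different faces, obtaining partial polynomials $S(x)$ and $D(x)$ and the explicit $2\times 2$ transfer matrix
\[
M(x)=\left[\begin{array}{cc} D(x)+S(x) & D(x)\\ 0 & S(x)\end{array}\right],
\]
whose eigenvalues are read off as $\lambda_1(x)=D(x)+S(x)$ and $\lambda_2(x)=S(x)$. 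Note that this matrix is upper triangular, hence reducible: the ``different faces'' type can never return to the ``same face'' type, so your mixing argument (``any type can be reached from any other after a bounded number of amalgamations, hence $M(1)^N>0$'') is simply false for this family. What actually makes Theorem~\ref{11-3} applicable is the explicit strict gap $\lambda_1(1)=D(1)+S(1)>S(1)=\lambda_2(1)$, valid precisely because $D\neq 0$ when the maximum genus of $H$ is positive (if $D=0$ then $H$ is a path and the hypothesis fails).

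The second gap is in your argument that $v>0$. First, $v\geq 0$ is not ``automatic'' from the Remark, which only computes the variance in the special case $P_n(x)=a(x)\lambda_1^n(x)$ and otherwise says ``we can expect''; the paper proves $v\geq 0$ from the explicit formula for $v$ in terms of $D,S$ via the Cauchy--Schwarz inequality. Second, your contradiction step conflates the width of the \emph{support} of the genus distribution with its distributional spread: exhibiting embeddings whose genera range over an interval of length $\Theta(n)$ does not contradict Case~III concentration, since those embeddings could carry exponentially small probability mass (a distribution can have support of width $n$ yet have $\frac{X_n-en}{n^{\alpha}}\to 0$ in law). The paper closes this by showing that equality in Cauchy--Schwarz, i.e.\ $v=0$, forces the genus polynomial of $H$ to be a monomial, $\gamma_{\max}(H)=\gamma_{\min}(H)$, which forces $H$ to be a cactus (resp.\ a path in the Euler-genus case), contradicting the hypothesis $\gamma_{\max}(H)>0$ (resp.\ $\eps_{\max}(H)>0$). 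Without the explicit matrix and the explicit formula for $v$, neither of your two key claims can be substantiated.
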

\begin{proof}Let $(H,u,v)$ be a graph with two pendant roots $u$ and $v.$  We introduce the following two \textit{partial genus distributions (partial Euler-genus distributions)} for $(H,u,v)$. Let $d_i(H)$ be  the number of embeddings of $(H,u,v)$ in the surface $O_i$ $(S_i)$ such that   $u$ and $v$  lie on  different face-boundary walks. In this case, we say that the embedding has type $d$.
 Similarly, let $s_i(H)$ be  the number of embeddings of $(H,u,v)$ in  $O_i$ $(S_i)$ such that  $u,v$ lie on the same face-boundary walk,
  and we call the embedding has type $s$. The two {partial genus polynomials (partial Euler-genus polynomials)} of $(H,u,v)$  are given by
$D_H(x)=\dsum_{i\geq0}d_i(H)x^i,$ and
$S_H(x)=\dsum_{i\geq0}s_i(H)x^i.$
Clearly,
\begin{eqnarray}
\label{zha-1}
  P_H(x)=D_H(x)+S_H(x),
\end{eqnarray}
where $P_H(x)$ is the genus polynomial (Euler-genus
polynomial) of $(H,u,v)$.
By face-tracing and Euler formula, we have the following claim.
\begin{description}\label{claim1}
  \item[Claim]  If the graph $(P_{n-1}^{H},u_1,v_{n-1})$ and $(H_n,u_n,v_n)$ embed on
 surfaces $O_i{ \ (S_i)} $ and $O_j{ \ (S_j)},$ respectively, then the graph $P_{n}^{H}$  embeds on $O_{i+j}{\ (S_{i+j})}$.
\end{description}

\noindent By using the claim above, we will build recurrence formulas for the partial genus polynomials of $P_{n}^{H}.$ There are four cases.

\begin{description}
  \item[Case 1] If both  the embeddings of $P_{n-1}^{H}$ and $(H_n,u_n,v_n)$  have  type $d$, then the embedding of $P_{n}^{H}$ has type $d.$ This case contributes to $D_{P_{n}^{H}}(x)$  the term $D_{P_{n-1}^H}(x)D_{H_n}(x).$
  \item[Case 2] If the embeddings of $P_{n-1}^{H}$ and $(H_n,u_n,v_n)$ have  type $d$ and $s,$ respectively, then the embedding of $P_{n}^{H}$ has type $d.$ This case contributes to $D_{P_{n}^{H}}(x)$  the term $D_{P_{n-1}^H}(x)S_{H_n}(x).$
  \item[Case 3] If the embedding of $P_{n-1}^{H}$ has  type $s$ and the embedding of  $(H_n,u_n,v_n)$ has type $d$, then the embedding of $P_{n}^{H}$ has type $d,$ this case contributes to $D_{P_{n}^{H}}(x)$  the term $S_{P_{n-1}^H}(x)D_{H_n}(x).$
  \item[Case 4] If both the embeddings of $P_{n-1}^{H}$ and $(H_n,u_n,v_n)$  have  type $s$, then the embedding of $P_{n}^{H}$ has type $s.$ This case contributes to $S_{P_{n}^{H}}(x)$  the term $S_{P_{n-1}^H}(x)S_{H_n}(x).$
\end{description}

The following linear recurrence system of equations summarizes the four cases above.
\begin{eqnarray}
D_{P_{n}^{H}}(x)&=&\left(D_{H_n}(x)+S_{H_n}(x)\right)D_{P_{n-1}^{H}}(x)+D_{H_n}(x)S_{P_{n-1}^{H}}(x),\label{orient:sum1} \\
S_{P_{n}^{H}}(x)&=&S_{H_n}(x)S_{P_{n-1}^{H}}(x).\label{orient:sum2}
\end{eqnarray}
Rewriting the equations above as
\begin{align*}
\left[
    \begin{array}{c}
    D_{P_{n}^{H}}(x) \\
    S_{P_{n}^{H}}(x) \\
    \end{array}
  \right] =\left[
   \begin{array}{cc}
     D_{H_n}(x)+S_{H_n}(x)  & D_{H_n}(x) \\
     0  & S_{H_n}(x) \\
   \end{array}\right]\left[
    \begin{array}{c}
    D_{P_{n-1}^{H}}(x) \\
    S_{P_{n-1}^{H}}(x) \\
    \end{array}
  \right].\end{align*}

\noindent Since $(H_n,u_n,v_n)$ be a copy of $(H,u,v),$ thus the production matrix $M(x)$ of the genus ({  Euler-genus}) distributions of { $\{P_{n}^{H}\}_{n=1}^\infty$}  is $$\left[
   \begin{array}{cc}
     D_{H}(x)+S_{H}(x)  & D_{H}(x) \\
     0  & S_{H}(x) \\
   \end{array}\right].$$

For simplicity of writing,  we use $D(x)$, $S(x)$ and $P(x)$ to denote $D_{H}(x)$, $S_{H}(x)$ and $P_H(x)$,   respectively.
Obviously, the eigenvalues of  matrix $M(x)$ are given by
\begin{eqnarray*}
  \lambda_1(x)=D(x)+S(x),\quad \lambda_2(x)=S(x).
\end{eqnarray*}
Since  the graph $H$ is connected,  $S(x)=0$ is impossible.
We make the following discussions on the $D(x)$

If  $D(x)=0,$ then  $H$ is the path graph $P_m$ on $m$ vertices ($m\geq 2$).
 In this case, the maximum genus  (maximum Euler-genus) of $(H,u,v)$ is equal  $0.$

 Now we consider the case   $D(x)\neq 0.$
Under this situation, $\lambda_1(1)>\lambda_2(1)$ and  the matrix $M(1)$ is primitive.
By direct calculation, we get
\begin{eqnarray*}
  e&=& \frac{D'(1)+S'(1)}{D(1)+S(1)}, \quad
  \\  v&=& \frac{-(D'(1)+S'(1))^2+\big(D(1)+S(1)\big)\big(D''(1)+S''(1)+D'(1)+S'(1)\big)}{(D(1)+S(1))^2}.
\end{eqnarray*}
We assume $P(x)=\sum_{m}c_mx^m.$
By  \textit{Cauchy-Schwarz inequality},
$$\left(\sum_{m}m c_m\right)^2\leq \sum_{m} m^2 c_m \cdot \sum_{m}c_m=
 \left(\sum_{m}( m^2-m) c_m+\sum_{m}mc_m\right) \cdot \sum_{m}c_m,
   $$
 which implies
  $P'(1)^2\leq (P''(1)+P'(1))\cdot P(1).$
  By this inequality and (\ref{zha-1}), one easily sees that $v\geq 0.$
Therefore, $v=0$ is equivalent  to the above  Cauchy-Schwarz inequality becomes equality, that is
\begin{eqnarray*}
\left(\sum_{m}m c_m\right)^2=\sum_{m} m^2 c_m \cdot \sum_{m}c_m.
\end{eqnarray*}
Since $P(x)\neq 0,$ we have $c_m>0$  for some $m\geq 0.$
The above equality is also
 equivalent  to that for some $x\in \mR$
 \begin{eqnarray*}
  m\sqrt{c_m}+x\sqrt{c_m}=0, \quad \forall m\geq 0.
 \end{eqnarray*}
 Therefore,  $v=0$ if and only if $\gamma_{max}(H)=\gamma_{min}(H)$ ($\eps_{\max}(H)= \eps_{\min}(H)$)
 .
 Noting that, a known fact in the topological  graph  theory says that $\gamma_{max}(H)=\gamma_{min}(H)$ ($\eps_{\max}(H)= \eps_{\min}(H)$)
 implies that $H$ is the cactus graph ($H$ is the path graph), we complete our proof.

\end{proof}

\begin{figure}[h]
  \centering
  \includegraphics[width=0.45\textwidth]{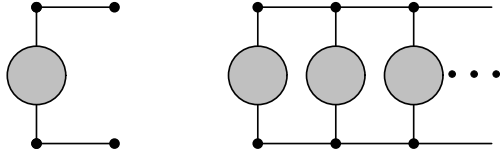}
  \caption{  Graph $H$ (left), and ladder-like graph $L^H_n$ (right)}
  \label{ladder}
\end{figure}

Given any graph $(H, u,v)$ whose root-vertices $u$ and $v$ are both 1-valent, as in Figure \ref{ladder},  we construct a \textit{ladder-like sequence of graphs}  $\{(L_{n}^H, u_{n}, v_{n})\}_{n=1}^\infty$ \cite{CGMT19}.  The shadow part of $H$ can be any connected graph, and the ladder-like sequences are a special case of $H$-linear family of graphs.
\begin{cor}The genus distributions of the ladder-like sequence  of graphs $ \{L_n^H\}_{n=1}^\infty$ are
asymptotic normal distribution.
\end{cor}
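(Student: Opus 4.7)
The plan is to invoke Theorem~\ref{11-3a} with the production matrix $M(x)$ for the genus distribution of the ladder-like sequence $\{L_n^H\}_{n=1}^\infty$, which exists because the excerpt already notes that ladder-like sequences are a special case of an $H$-linear family of graphs. It therefore suffices to verify two things: (i) $M(1)$ is primitive, and (ii) the quantity $v$ from (\ref{pp-1}) is strictly positive, placing us in Case~I of Theorem~\ref{11-3}.

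For (i), I would unpack the face-tracing analysis underlying $M(x)$. Each embedding of $(H,u,v)$ is classified by how its 1-valent roots sit in face-boundary walks, yielding a finite type set, and the rung involved in the ladder-like amalgamation genuinely mixes these types. Strong connectivity and aperiodicity of the support digraph of $M(1)$ would then follow by exhibiting explicit transitions between every ordered pair of types, using the fact that each rung creates enough nonzero entries in $M(1)$; this is already visible in the classical ladder example earlier in the section, whose $2\times 2$ matrix $M(1)$ has all positive entries and is obviously primitive. Example~\ref{q-3} shows that the primitivity hypothesis cannot be dropped.

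For (ii), I would adapt the Cauchy--Schwarz argument from the path-like corollary. Writing the genus polynomial as $P_n(x) = \sum_m c_m x^m$, Cauchy--Schwarz gives $(P_n'(1))^2 \leq (P_n''(1)+P_n'(1))\, P_n(1)$, with equality precisely when $\gamma_{\max}(L_n^H) = \gamma_{\min}(L_n^H)$. Translating this into the expression (\ref{pp-1}) via Proposition~\ref{1} (so that $D = P_n(1)/P_{n-1}(1)$ captures the leading eigenvalue behavior of $M(1)$), one obtains $v \geq 0$, with equality iff $L_n^H$ is a cactus graph. But a ladder-like graph always contains two cycles that share a vertex, formed by a pair of distinct rungs connected through consecutive copies of $H$, so $L_n^H$ is not a cactus graph, and hence $v > 0$.

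The main obstacle is the explicit bookkeeping of the type set for a generic $H$ whose shadow part may be any connected graph, together with the verification that $M(1)$ is primitive rather than merely irreducible. Once the partial genus polynomials of $(H,u,v)$ and their interaction through the rung are written down cleanly, both the primitivity check and the strictness of the Cauchy--Schwarz inequality reduce to routine verifications, and the asymptotic normality assertion then follows directly from Theorem~\ref{11-3a}.
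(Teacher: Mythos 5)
Your overall skeleton (invoke Theorem~\ref{11-3a}, check primitivity of $M(1)$, check $v>0$) matches the paper's, but the decisive step --- proving $v>0$ --- has a genuine gap. You apply Cauchy--Schwarz to the genus polynomial $P_n(x)$ of $L_n^H$ itself and claim equality holds precisely when $\gamma_{\max}(L_n^H)=\gamma_{\min}(L_n^H)$, concluding $v=0$ iff $L_n^H$ is a cactus. This does not follow: $v$ is the \emph{linear growth rate} of the variance (i.e.\ $\mathrm{Var}(X_n)=v\cdot n+O(1)$), so Cauchy--Schwarz on $P_n$ only yields $\mathrm{Var}(X_n)\geq 0$ for each $n$, hence $v\geq 0$ in the limit; it says nothing about the equality case, since $v=0$ is compatible with bounded but strictly positive variance and a nondegenerate limiting distribution (this is exactly Case~III of Theorem~\ref{11-3}). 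The analogous argument in the path-like corollary works only because there $\lambda_1(x)=D_H(x)+S_H(x)=P_H(x)$ is the genus polynomial of the single block $H$, so $e$ and $v$ are literally the mean and variance of a \emph{fixed finite} distribution and the Cauchy--Schwarz equality case applies. For ladder-like sequences $\lambda_1(x)$ is a root of a cubic with no such interpretation; the paper instead takes the explicit $3\times 3$ production matrix from \cite{CGMT19} in terms of the partial genus polynomials $p(x),q(x)$ of $H$, computes $e$ and $v$ in closed form, and proves $v>0$ by applying Cauchy--Schwarz to $p$ and $q$ separately and then disposing of the cross terms via the identity $27(i-j)^2+36(i-j)+14=27\bigl(i-j+\tfrac{2}{3}\bigr)^2+2>0$ together with $q(1)>0$. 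Your observation that $L_n^H$ is never a cactus is true but does not close this gap.

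A secondary issue: your primitivity plan is left as a sketch ("exhibit transitions between every ordered pair of types"), and your appeal to the $2\times 2$ all-positive matrix earlier in the section is misleading --- that is the Euler-genus matrix of the plain ladder, whereas the genus production matrix of a general ladder-like sequence is the $3\times 3$ matrix above, which contains zero entries (and even a zero row when $p(1)=0$). The paper verifies the hypothesis it actually needs --- a unique eigenvalue of maximal modulus with multiplicity one --- directly from the eigenvalues $4(p(1)+q(1))$, $4p(1)-2q(1)$, $0$ using $p(1)\geq 0$ and $q(1)>0$. Without the explicit matrix, neither your primitivity check nor any computation of $v$ can be carried out, so the proposal as written does not yet constitute a proof.
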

\begin{proof}
 By \cite{CGMT19}, the production matrix for genus distributions of the ladder-like sequence of graphs $ L_1^H,L_2^H,L_3^H,\cdots$  is
\small{
\begin{eqnarray*}
  M(x)=p(x)\left[\begin{matrix}
    4x&2x &0
    \\ 0& 0& 0
    \\ 0 &2x& 4x
  \end{matrix}
  \right]
  +q(x)\left[\begin{matrix}
    0&0 &0
    \\ 0& 2& 4
    \\ 4x &2x& 0
  \end{matrix}
  \right],
\end{eqnarray*}}
where
$p(x),q(x)\in \mZ(x)$ are  the partial genus polynomials for $H$. Also by \cite{CGMT19}, $q(x)=0$ is impossible.

Obviously, $D=4(p(1)+q(1))$
and  the eigenvalues of  matrix $M(1)$ are given by
\begin{eqnarray*}
  \lambda_1=4(p(1)+q(1)),~\lambda_2=4p(1)-2q(1),\lambda_3=0.
\end{eqnarray*}
By $p(1)\geq 0$ and $q(1)>0$, one easily  sees that
\begin{eqnarray*}
  \lambda_1>|\lambda_2|, \quad \lambda_1>|\lambda_3|
\end{eqnarray*}
and  the matrix    $M(1)$  is   primitive.

With the help of a computer, one arrives at that
\begin{eqnarray*}
&&e=\frac{3 \left(p'(1)+q'(1)\right)+3 p(1)+q(1)}{3 (p(1)+q(1))},
 \\  && v=\frac{4}{27(p(1)+q(1))}\Big[
2 q(1)^2-27 \big(p'(1)+q'(1)\big)^2
\\ && \quad +9 q(1) \Big(3p''(1)+3 q''(1)+3 q'(1)+7 p'(1)\Big)
\\ && \quad +p(1) \Big(27 p''(1)+27 p'(1)+27 q''(1)-9q'(1)+14 q(1)\Big)
\Big].
\end{eqnarray*}
In the rest of this corollary, we will prove  $v>0.$
Assume
\begin{eqnarray*}
  p(x)=\sum_{m}a_mx^m,\quad q(x)=\sum_mb_mx^m.
\end{eqnarray*}
Using \textit{Cauchy-Schwarz inequality} again, we  see  that
  $p'(1)^2\leq (p''(1)+p'(1))\cdot p(1)$
 and $q'(1)^2\leq (q''(1)+q'(1))\cdot q(1)$.
 Therefore, in order to prove   $v>0$, it is suffice to show   that
\begin{eqnarray*}
54p'(1)q'(1)+9p(1)q'(1)
<  2 q(1)^2 +9 q(1) \Big(3p''(1)+7 p'(1)\Big)
+p(1) \Big(27 q''(1)+14 q(1)\Big),
\end{eqnarray*}
which   is  equivalent  to
\begin{eqnarray*}
   \sum_{i,j}\Big[54ija_ib_j+9a_ijb_j\Big]
 <2 q(1)^2 + \sum_{i,j}\Big[27(i^2-i)+63i+27(j^2-j)+14\Big]a_ib_j.
\end{eqnarray*}
The above inequality is due to
\begin{eqnarray*}
 2 q(1)^2+ \sum_{i,j}\Big[27(i-j)^2+36(i-j)+14\Big]a_ib_j
 = 2 q(1)^2+ \sum_{i,j}\Big(27\big(i-j+\frac{2}{3}\big)^2+2\Big)a_ib_j>0.
\end{eqnarray*}
We complete the proof of $v>0.$

By Theorem \ref{11-3a},
the genus distributions of the ladder-like sequence  $ \{L_n^H\}_{n=1}^\infty$ are
asymptotic normal distribution  with mean $e\cdot n$ and variance $v\cdot n$.
\end{proof}

\subsection{{Limits for crosscap-number distributions of graphs}}

We  demonstrate the relationship between the limits of crosscap-number distributions and   Euler-genus distributions. 

 \begin{thm}
 \label{2-1}
    Let $a_n=\frac{\Gamma_{G_n^\circ}(1)}{\cE_{G_n^\circ}(1)},b_n=1-a_n.$
 If  $\lim\limits_{n\rightarrow \infty}a_n=0,$    we have
 \begin{eqnarray}
 \label{3-2}
  \lim_{n\rightarrow \infty}\sup_{x\in \mR}\Big|\sum_{0\leq i\leq x}\frac{\varepsilon_i(G_n^\circ)}{\cE_{G_n^\circ}(1)}-\sum_{0\leq i\leq x}\frac{\tilde{\gamma_i}(G_n^\circ)}{\tilde{\Gamma}_{G_n^\circ}(1)}\Big|=0,
 \end{eqnarray}
 which implies that
 the limits of crosscap-number distributions are the same as that of    Euler-genus distributions.
 \end{thm}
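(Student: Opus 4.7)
The plan is to reduce the claim to a purely elementary bookkeeping identity and then apply two uniform bounds that both vanish as $n\to\infty$. The only non-trivial ingredients are the surface-by-surface decomposition
\[
\varepsilon_i(G) \;=\; \tilde\gamma_i(G)\;+\;\gamma_{i/2}(G)\,\mathbbm{1}_{\{i\text{ even}\}}\qquad(i\ge 0)
\]
and its immediate consequence $\cE_{G_n^\circ}(1)=\Gamma_{G_n^\circ}(1)+\tilde\Gamma_{G_n^\circ}(1)$. The latter says exactly $a_n+b_n=1$, so the hypothesis $a_n\to 0$ is the same as $b_n\to 1$.

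First I would normalize both partial sums to the common denominator $\cE_{G_n^\circ}(1)$, writing
\[
\frac{\tilde\gamma_i(G_n^\circ)}{\tilde\Gamma_{G_n^\circ}(1)}\;=\;\frac{1}{b_n}\cdot\frac{\tilde\gamma_i(G_n^\circ)}{\cE_{G_n^\circ}(1)}
\]
and using the decomposition above to split
\[
\sum_{0\le i\le x}\frac{\varepsilon_i(G_n^\circ)}{\cE_{G_n^\circ}(1)}\;=\;\sum_{0\le i\le x}\frac{\tilde\gamma_i(G_n^\circ)}{\cE_{G_n^\circ}(1)}\;+\;\sum_{0\le k\le x/2}\frac{\gamma_k(G_n^\circ)}{\cE_{G_n^\circ}(1)}.
\]
Subtracting then gives, for every $x\in\mR$,
\[
\sum_{0\le i\le x}\frac{\varepsilon_i(G_n^\circ)}{\cE_{G_n^\circ}(1)}-\sum_{0\le i\le x}\frac{\tilde\gamma_i(G_n^\circ)}{\tilde\Gamma_{G_n^\circ}(1)}\;=\;\Bigl(1-\tfrac{1}{b_n}\Bigr)\sum_{0\le i\le x}\frac{\tilde\gamma_i(G_n^\circ)}{\cE_{G_n^\circ}(1)}\;+\;\sum_{0\le k\le x/2}\frac{\gamma_k(G_n^\circ)}{\cE_{G_n^\circ}(1)}.
\]

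Next I would estimate the two terms uniformly in $x$. The second is obviously bounded by $\Gamma_{G_n^\circ}(1)/\cE_{G_n^\circ}(1)=a_n$. For the first, the tail sum is bounded above by $b_n$, so its absolute value is at most $|1-1/b_n|\cdot b_n=|b_n-1|=a_n$. Both bounds are independent of $x$, so the supremum in (\ref{3-2}) is at most $2a_n$, which tends to $0$ by assumption. This establishes the uniform closeness of the two cumulative distributions, and hence the stated equivalence of their limits.

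There is no real obstacle — the theorem is essentially a triangle-inequality argument built on the trivial identity $\cE=\Gamma+\tilde\Gamma$. The one point worth emphasizing is that the conclusion requires only the scalar hypothesis $a_n\to 0$, with no assumption on the shape of either distribution; in particular, if the Euler-genus distributions are already known (via Theorem \ref{11-3a}, say) to be asymptotically normal with parameters $(e\cdot n,v\cdot n)$, then the same asymptotic normality is inherited automatically by the crosscap-number distributions whenever $a_n\to 0$.
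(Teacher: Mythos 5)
Your argument is correct and is essentially the paper's own proof: both rest on the decomposition $\varepsilon_i=\tilde\gamma_i+\gamma_{i/2}$ (from $\cE_G(x)=\Gamma_G(x^2)+\tilde\Gamma_G(x)$), normalize to the common denominator $\cE_{G_n^\circ}(1)$, and bound the difference of the cumulative sums uniformly in $x$ by $2a_n$. The paper merely phrases the same computation in terms of the random variables $\mU_n,\tilde{\mU}_n,\mW_n$ rather than explicit sums, which is a purely notational difference.
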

 \begin{proof}
 Since $a_n=\frac{\Gamma_{G_n^\circ}(1)}{\cE_{G_n^\circ}(1)},$ then $b_n=1-a_n=\frac{\tilde{\Gamma}_{G_n^\circ}(1)}{\cE_{G_n^\circ}(1)}.$
Let $\mU_n,\tilde{\mU}_n,\mW_n$ be three random variables with distributions given by the
genus, crosscap-number and  Euler-genus respectively, that is
\begin{eqnarray*}
  \mP(\mU_n=i)=
  \frac{\gamma_i(G_n^\circ)}{\Gamma_{G_n^\circ}(1)},
  ~~ \mP(\tilde{\mU}_n=i)=
  \frac{\tilde{\gamma}_i(G_n^\circ)}{\tilde{\Gamma}_{G_n^\circ}(1)},
  ~~\mP(\mW_n=i)=\frac{\varepsilon_i(G_n^\circ)}{\cE_{G_n^\circ}(1)}, ~i=0,1\cdots.
\end{eqnarray*}
Since
\begin{eqnarray*}
\label{1-3} \cE_G(x)=\Gamma_G(x^2)+\tilde{\Gamma}_G(x),
\end{eqnarray*}
 for any $i\in \mN,$ we have
\begin{eqnarray*}
&& \cE_{G_n^\circ}(1)\cdot  \mP(\mW_n=i)= \varepsilon_i(G_n^\circ)=\gamma_{\frac{i}{2}}(G_n^\circ)+\tilde{\gamma}_i(G_n^\circ)
\\ && =\mP(\mU_n=\frac{i}{2})\cdot \Gamma_{G_n^\circ}(1)+\mP(\tilde{\mU}_n=i)\cdot \tilde{\Gamma}_{G_n^\circ}(1),
\end{eqnarray*}
here $\gamma_{\frac{i}{2}}(G_n^\circ)$ is defined as $0$ when $i$ is an odd number.
Therefore, it holds that
\begin{eqnarray*}
  \mP(\mW_n=i)=a_n \mP(2\mU_n=i)+b_n\mP(\tilde{\mU}_n=i)
\end{eqnarray*}
 By this, for any $x\geq 0$,
 we have
 \begin{eqnarray*}
   \mP(\mW_n\leq x)-\mP(\tilde{\mU}_n\leq x)=
   a_n \mP(2\mU_n\leq x)+(b_n-1)\mP(\tilde{\mU}_n\leq x).
 \end{eqnarray*}
 Thus
 \begin{eqnarray*}
   \Big|\mP(\mW_n\leq x)-\mP(\tilde{\mU}_n\leq x)\Big|\leq
   a_n +(1-b_n).
 \end{eqnarray*}
 By the definitions of $\mW_n,\tilde{\mU}_n$,  the above inequality implies (\ref{3-2}).
\end{proof}

By  (\ref{3-2}), once the limits of   Euler-genus distributions is   obtained, the limits of crosscap-number distributions  is   also known.

With the same method as that in   Theorem \ref{2-1}, we obtain the following corollary.
\begin{cor}
\label{r-2}
  Let  $\{G_n\}_{n=1}^\infty$ be  any {sequence} of graphs,  which is not required to be  $H$-linear family of graphs with spiders. If $\lim\limits_{n\rightarrow \infty}\frac{\Gamma_{G_n}(1)}{\cE_{G_n}(1)}=\frac{1}{2^{\beta({G_n})}}=0$
  or $\lim\limits_{n\rightarrow  \infty}\beta(G_n)=\infty,$
  we have
 \begin{eqnarray*}
  \lim_{n\rightarrow \infty}\sup_{x\in\mR}\Big|\sum_{0\leq  i\leq x}\frac{\varepsilon_i(G_n)}{\cE_{G_n}(1)}-\sum_{0\leq  i\leq x}\frac{\tilde{\gamma_i}(G_n)}{\tilde{\Gamma}_{G_n}(1)}\Big|=0.
 \end{eqnarray*}
\end{cor}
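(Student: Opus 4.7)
The plan is to mimic the proof of Theorem~\ref{2-1} verbatim, observing that that argument never actually used the $H$-linear structure of $\cG^\circ$; it only used (i) the decomposition $\cE_G(x) = \Gamma_G(x^2) + \tilde{\Gamma}_G(x)$, and (ii) the convergence $a_n \to 0$ of the orientable/Euler ratio. Both ingredients are available here for a completely arbitrary graph sequence $\{G_n\}_{n=1}^\infty$.

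First I would set $a_n = \Gamma_{G_n}(1)/\cE_{G_n}(1)$ and $b_n = 1 - a_n = \tilde\Gamma_{G_n}(1)/\cE_{G_n}(1)$, and recall the standard counting identity from topological graph theory: the number of general rotation systems of a graph equals $2^{\beta(G)}$ times the number of pure rotation systems, since for a fixed spanning tree $T$ only the $\beta(G) = |E(G)| - |V(G)| + 1$ cotree edges contribute a nontrivial twist in $\lambda$. This gives $\cE_{G_n}(1) = 2^{\beta(G_n)}\,\Gamma_{G_n}(1)$, so $a_n = 1/2^{\beta(G_n)}$. In particular, the two hypotheses $a_n \to 0$ and $\beta(G_n) \to \infty$ are equivalent, and either one forces $b_n \to 1$.

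Next, I would introduce the random variables $\mU_n, \tilde{\mU}_n, \mW_n$ distributed according to the normalized genus, crosscap-number, and Euler-genus distributions of $G_n$, exactly as in the proof of Theorem~\ref{2-1}. The identity $\cE_G(x) = \Gamma_G(x^2) + \tilde{\Gamma}_G(x)$ then yields, for every $i \in \mN$,
\begin{eqnarray*}
\mP(\mW_n = i) = a_n\,\mP(2\mU_n = i) + b_n\,\mP(\tilde{\mU}_n = i),
\end{eqnarray*}
where $\gamma_{i/2}(G_n)$ is interpreted as $0$ when $i$ is odd. Summing over $0 \leq i \leq x$ and subtracting $\mP(\tilde\mU_n \leq x)$ gives
\begin{eqnarray*}
\bigl|\mP(\mW_n \leq x) - \mP(\tilde\mU_n \leq x)\bigr|
= \bigl|a_n\,\mP(2\mU_n \leq x) + (b_n - 1)\,\mP(\tilde\mU_n \leq x)\bigr|
\leq a_n + (1 - b_n) = 2a_n.
\end{eqnarray*}
Taking the supremum over $x \in \mR$ and letting $n \to \infty$ yields the claimed uniform convergence, since the right-hand bound $2a_n$ is independent of $x$ and tends to $0$ by hypothesis.

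There is essentially no obstacle; the only subtlety is confirming that the identity $\cE_{G_n}(1) = 2^{\beta(G_n)}\Gamma_{G_n}(1)$ embedded in the statement really does hold in full generality (so that the two forms of the hypothesis are genuinely equivalent), but this is a classical consequence of the $T$-rotation-system counting recalled in the introduction. Once that is in place the proof is a one-line adaptation of Theorem~\ref{2-1}.
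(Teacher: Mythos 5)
Your proposal is correct and matches the paper exactly: the paper proves Corollary \ref{r-2} simply by invoking "the same method as that in Theorem \ref{2-1}," which is precisely the argument you reproduce (the decomposition $\cE_G(x)=\Gamma_G(x^2)+\tilde\Gamma_G(x)$, the three random variables, and the uniform bound $a_n+(1-b_n)=2a_n\to 0$). Your additional remark verifying $\cE_{G_n}(1)=2^{\beta(G_n)}\Gamma_{G_n}(1)$ via the $T$-rotation-system count is a correct justification of the equivalence of the two hypotheses, which the paper takes for granted.
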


\section{More examples and some research problems}

The graphs, which have explicit  formulas for their   embedding distributions,
 mainly are linear families of graphs with spiders, see \cite{CG18,CG19,CGMT19b,GKMT18,SS97} for details. There are  many linear families of  graphs with spiders which satisfy  the conditions of  Theorem \ref{11-3} or  Theorem   \ref{11-3a}.  However, we give a few examples to demonstrate the theorems, including the famous M\"{o}bius ladders, Ringel ladders and Circular ladders.

\begin{example}
Let $Y_n$ be the iterated-claw graph of Figure \ref{pp3}.
  \begin{figure}[h]
  \centering
  \includegraphics[width=0.45\textwidth]{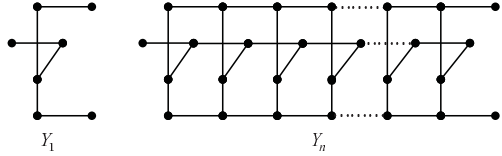}
  \caption{The claw $Y_1$ (left), and the iterated-claw graph $Y_n$ (right)}
  \label{pp3}
\end{figure}

\noindent {The genus polynomial  for the  iterated-claw graph $Y_n$ \cite{GMTW16-2} is given by} $$\Gamma_{Y_n}(x)=20 x\Gamma_{Y_{n-1}}(x)-8 (-3 x + 8 x^2)\Gamma_{Y_{n-2}}(x)-384 x^3 \Gamma_{Y_{n-3}}(x).$$
Since
\begin{eqnarray*}
  F(x,\lambda)&=&\lambda^3-20  x\lambda^2+8 (8 x^2-3 x) \lambda +384 x^3,
  \\ F(1,\lambda)&=& (\lambda-16)\big(\lambda-2 (1-\sqrt{7})\big)\big(\lambda-2 (\sqrt{7}+1)\big),
\end{eqnarray*}
the conditions of Theorem \ref{11-3}  hold for $P_n(x)=\Gamma_{Y_n}(x)$ and $\lambda_1(1)=D=16$.
 By the derivative rule of implicit function and with the help of Maple,   one sees
 \begin{eqnarray*}
   e=\frac{6}{7},\quad v=\frac{8}{147}>0.
 \end{eqnarray*}
 For the constants  $e,v$ given above,
we have
    \begin{eqnarray*}
   \lim_{n\rightarrow \infty } \sup_{x\in \mR }\left|\frac{1}{\Gamma_{Y_n}(1) }\sum_{0\leq  i\leq  x \sqrt{ v\cdot n}+e\cdot n}\gamma_{i}(Y_n)-\int_{-\infty}^x \frac{1}{\sqrt{2\pi}}e^{-\frac{1}{2}u^2}\dif u\right|=0.
  \end{eqnarray*}

 By \cite{CG18}, the Euler-genus polynomials of $Y_n$ satisfy the following three-order recurrence relation
 \begin{eqnarray*}
   \cE_{Y_n}(x)&=& 2(3x+28x^2) \cE_{Y_{n-1}}(x)-16(-3x^2-12x^3+4x^4) \cE_{Y_{n-2}}(x)-3072x^6 \cE_{Y_{n-3}}(x).
 \end{eqnarray*}
 Set  $P_n(x)=\cE_{Y_n}(x)$ and
 \begin{eqnarray*}
   F(x,\lambda)=\lambda^3- 2(3x+28x^2) \lambda^2+16(-3x^2-12x^3+4x^4)\lambda+3072x^6.
 \end{eqnarray*}
 Since $F(1,\lambda)=(\lambda-64) (\lambda-6) (\lambda+8)$, the conditions  of Theorem \ref{11-3}   hold and  $D=\lambda_1(1)=64$.
 By the derivative rule of implicit function and with the help of Maple,   one sees
 \begin{eqnarray*}
 \label{z-1}
  e=\frac{160}{87},v=\frac{269092}{1975509}>0.
 \end{eqnarray*}
For the constants  $e,v$ given above,
the Euler-genus distributions of $Y_n$ are asymptotic normal distribution  with mean $e\cdot n$ and variance $v\cdot n$. By Theorem \ref{2-1}, the crosscap-number distributions of $Y_n$ are also
asymptotic normal distribution with mean $e\cdot n$ and variance $v\cdot n$.

 \end{example}

\begin{example}Let $G_n=P_n\Box P_3$ be the \mdef{grid graph} \cite{KPG12}. {The genus polynomials for the grid graph $G_n$  are given by  the recursion}
\begin{eqnarray}
\Gamma_{G_n}(x)&=&(1 + 30 x)\Gamma_{G_{n-1}}(x)-42 (-x + 4 x^2)\Gamma_{G_{n-2}}(x)\notag\\
&&-72 (x^2 + 14 x^3)\Gamma_{G_{n-3}}(x)+1728 x^4\Gamma_{G_{n-4}}(x).\notag
\end{eqnarray}
With the help of Maple,
the conditions of Theorem \ref{11-3} hold for $P_n(x)=\Gamma_{G_n}(x)$ and
\begin{eqnarray*}
  D=\lambda_1(1)=24, ~e=\frac{34}{41},~v=\frac{4816}{68921}>0.
\end{eqnarray*}

{The Euler-genus polynomials for the grid graphs $G_n$ \cite{CG18} satisfy the recursion }
\begin{eqnarray}\label{Euler:grid}
\quad\qquad \cE_{G_n}(x) &=& (1 + 11 x + 84 x^2)\cE_{G_{n-1}}(x)+ 12 x^2 (7 + 30 x - 28 x^2)\cE_{G_{n-2}}(x) \notag\\
&& -288 x^4 (1 + 4 x + 32 x^2)\cE_{G_{n-3}}(x) + 27648 x^8\cE_{G_{n-4}}(x).  \notag
\end{eqnarray}
With the help of Maple,
the conditions of Theorem \ref{11-3} hold for $P_n(x)=\cE_{G_n}(x)$   and
\begin{eqnarray*}
  D=\lambda_1(1)=96, ~e=\frac{5488}{3037},~v=\frac{4819233780}{28011371653}>0.
\end{eqnarray*}

By the discussions above, the embedding distributions of the grid graph  $G_n$ are asymptotic normal distributions.
\end{example}

\begin{example}\label{ladders}{Suppose that $n$ is a positive integer. Let $C_n$ be the cycle graph on $n$ vertices.  The Ringel ladder $R_n$ is obtained  by adding
an edge joining the two vertices of the leftmost edge and rightmost edge of the ladder graph $L_n$. A \mdef{circular ladder} $CL_n$ is the graphical Cartesian product $CL_n=C_n\Box P_2.$ The \mdef{M\"{o}bius ladder} $ML_n$ is formed from an $2n$-cycle by adding edges connecting opposite pairs of vertices in the cycle.  i.e., the M\"obius ladder can be described as a circular ladder with a half-twist. It is known in \cite{CG19} that the  Ringel ladder, circular ladder, and M\"{o}bius ladder are ring-like  families of graphs. Let $H_n$ be the  Ringel ladder $R_n$, circular ladder $CL_n$, or M\"{o}bius ladder $ML_n.$}


{By Theorem 3.1 in \cite{CG19}, the genus polynomials for  $H_n$ \cite{CG19} satisfy the recursion}
\begin{eqnarray}\label{Genus:circular:re}
\Gamma_{H_n}(x) &=& 4\Gamma_{H_{n-1}}(x) + (-5+20x)\Gamma_{H_{n-2}}(x)
	+ (-56x+2)\Gamma_{H_{n-3}}(x)\notag\\
&&-4(32x-11)x\Gamma_{H_{n-4}}(x) + 8(28x-1)x\Gamma_{H_{n-5}}(x)\notag\\
&&+32(8x-3)x^2\Gamma_{H_{n-6}}(x) - 256x^3\Gamma_{H_{n-7}}(x).\notag
\end{eqnarray}
Since
\begin{eqnarray*}
  F(x,\lambda)&=&\lambda^7- 4\lambda^6 - (-5+20x)\lambda^5
	- (-56x+2)\lambda^4
\\ &&
+4(32x-11)x\lambda^3 - 8(28x-1)x\lambda^2
-32(8x-3)x^2\lambda + 256x^3
\\ &=& (\lambda-1)(\lambda+2\sqrt{x})(\lambda-2\sqrt{x})
(\lambda-1+\sqrt{1+8x})(\lambda-1-\sqrt{1+8x})
\\ &&  *\big(\lambda-\frac{1}{2} (1-\sqrt{32 x+1})\big)\big(\lambda-\frac{1}{2} (1+\sqrt{32 x+1})\big),
\end{eqnarray*}
the conditions of Theorem  \ref{11-3} hold  for $P_n(x)=\Gamma_{H_n}(x)$ and $\lambda_1(x)=\sqrt{8 x+1}+1, D=\lambda_1(1)=4$.
With the help of Maple, one easily sees
\begin{eqnarray*}
  e=\frac{1}{3}, ~v=\frac{2}{27}>0.
\end{eqnarray*}
 By Theorem  \ref{11-3}, for the constants  $e,v$ given above ,
we have
    \begin{eqnarray*}
   \lim_{n\rightarrow \infty } \sup_{x\in\mR}\left|\frac{1}{\Gamma_{H_n}(1) }\sum_{0\leq  i\leq  x \sqrt{ v\cdot n}+e\cdot n}\gamma_{i}(H_n)-\int_{-\infty}^x \frac{1}{\sqrt{2\pi}}e^{-\frac{1}{2}u^2}\dif u\right|=0.
  \end{eqnarray*}

{The Euler-genus polynomials  for $H_n$ \cite{CG18} are  given by the recursion}
\begin{eqnarray} \label{Euler:Mobius:re}
\cE_{H_n}(x)&=&(12x+4)\cE_{H_{n-1}}(x)+(-12x^2-34x-5)\cE_{H_{n-2}}(x)\notag\\
&&+(-240x^3-20x^2+26x+2)\cE_{H_{n-3}}(x)\notag\\
&&+4(80x^3+128x^2+14x-1)x\cE_{H_{n-4}}(x)\notag\\
&&+16(112x^3+8x^2-14x-1)x^2{\cE_{H_{n-5}}(x)}\notag\\
&&-128(8x^2+18x+3)x^4{\cE_{H_{n-6}}(x)}\notag\\
&&-2048(2x+1)x^6\cE_{H_{n-7}}(x).\notag
  \end{eqnarray}
Since
\begin{eqnarray*}
  F(x,\lambda)&=&\lambda^7 -(12x+4)\lambda^6-(-12x^2-34x-5)\lambda^5
-(-240x^3-20x^2+26x+2)\lambda^4
\\ &&
-4(80x^3+128x^2+14x-1)x\lambda^3
-16(112x^3+8x^2-14x-1)x^2\lambda^2
\\ &&+128(8x^2+18x+3)x^4\lambda
+2048(2x+1)x^6,
\\ F(1,\lambda)&=& (\lambda+2)^2(\lambda-3)(\lambda-4)(\lambda-8)
\big(\lambda-\frac{1}{2} (5-\sqrt{89})\big)
\big(\lambda-\frac{1}{2} (5+\sqrt{89})\big),
\end{eqnarray*}
the conditions of Theorem  \ref{11-3} hold  for $P_n(x)=\cE_{H_n}(x)$  and $D=\lambda_1(1)=8.$
With the help of Maple, we have
\begin{eqnarray*}
\lambda_1(x)=\sqrt{20 x^2+4 x+1}+2 x+1,~e=\frac{4}{5}, ~v=\frac{22}{125}>0.
\end{eqnarray*}
By Theorem \ref{11-3}, for the constants  $e,v$ given above,
 the Euler-genus  distributions   of $H_n$  is    asymptotic normal distribution
 with mean $e\cdot n$ and variance $v\cdot n.$

\end{example}

\skipit{
     \begin{figure}[h]
  \centering
  \includegraphics[width=0.50\textwidth]{ringel1}
  \caption{The Ringel Graph $R_n$}
  \label{pp1}
\end{figure}

\begin{example}
\label{1-7}
  By \cite{CG19}, the genus polynomials of Ringel ladder graph $R_n$, as shown in Figure \ref{pp1}, satisfy the following fourth-order recurrence relation
  \begin{eqnarray*}
    \Gamma_{R_n}(x)&=& 3\Gamma_{R_{n-1}}(x)+(16x-2)\Gamma_{R_{n-2}}(x)
    -24 x \Gamma_{R_{n-3}}(x)-64x^2\Gamma_{R_{n-4}}(x)
  \end{eqnarray*}
Let $P_n(x)=\Gamma_{R_n}(x).$ Obviously,
  \begin{eqnarray*}
    F(x,\lambda)&=&  \lambda^4- 3\lambda^3-(16x-2)\lambda^2
    +24 x \lambda+64x^2,
    \\
    F(1,\lambda)& =& (\lambda-4)(\lambda+2)(\lambda-\frac{1}{2} (1-\sqrt{33}))(\lambda-\frac{1}{2} (1+\sqrt{33})).
  \end{eqnarray*}
 Therefore, the conditions of Theorem \ref{11-3} hold
 and
   \begin{eqnarray*}
   \begin{split}
  &  \lambda_1(x)=\sqrt{8 x+1}+1,  ~D=\lambda_1(1)=4, ~e=\frac{\lambda_1'(1)}{D}=\frac{1}{3},
   \\
&    v=\frac{-\big(\lambda_1'(1)\big)^2 +D \lambda_1^{''}(1)
 +D\lambda_1'(1) }{D^2}=\frac{2}{27}.
 \end{split}
   \end{eqnarray*}
By  Theorem  \ref{11-3}, for   the constants  $e,v$ given above,
we have
    \begin{eqnarray*}
   \lim_{n\rightarrow \infty } \frac{1}{\Gamma_{R_n}(1) }\sum_{\tiny{k\in \mN,k\in (a\sqrt{v\cdot n}+e\cdot n,~b\sqrt{ v\cdot n}+e\cdot n]}}\gamma_{k}(R_n)=\int_{a}^b \frac{1}{\sqrt{2\pi}}e^{-\frac{1}{2}u^2}\dif u.
  \end{eqnarray*}
  Especially, we have
      \begin{eqnarray*}
   \lim_{n\rightarrow \infty } \frac{1}{\Gamma_{R_n}(1) }\sum_{\tiny{k\in \mN,k\in (-2\sqrt{v\cdot n}+e\cdot n,~2\sqrt{ v\cdot n}+e\cdot n]}}\gamma_{k}(R_n)=\int_{-2}^2 \frac{1}{\sqrt{2\pi}}e^{-\frac{1}{2}u^2}\dif u=0.9544.
  \end{eqnarray*}

The Euler-genus polynomials for the Ringel ladder graph  $R_n$ satisfy the following fourth-order recurrence relation
 \begin{eqnarray*}
   \cE_{R_n}(x)&=& (8x+3)\cE_{R_{n-1}}(x)+(16x^2-12x-2)\cE_{R_{n-2}}(x)
\\  &&-16(8x+3)x^2\cE_{R_{n-3}}(x)-256x^4 \cE_{R_{n-4}}(x).
 \end{eqnarray*}
Let    $P_n(x)=\cE_{R_n}(x)$. We see
 \begin{eqnarray*}
 F(1,\lambda)& =
 &  \lambda^4- 11 \lambda^3-2\lambda^2
+176\lambda+256
\\ & = & (\lambda-8)
(\lambda+2)
\big(\lambda- \frac{1}{2}(5-\sqrt{89})\big)\big(\lambda-\frac{1}{2}(5+ \sqrt{89})\big).
 \end{eqnarray*}
Thus,  the conditions of  Corollary  \ref{11-3a} hold.
By calculation, we obtain
   \begin{align*}
   &D=\lambda_1(1)=8,  ~e=\frac{4}{5},  ~v=\frac{22}{125}.
   \end{align*}
   By Corollary  \ref{11-3a} and Theorem \ref{2-1}, for the constants  $e,v$ given above,
we have
    \begin{eqnarray*}
   \lim_{n\rightarrow \infty } \frac{1}{\cE_{R_n}(1) }\sum_{\tiny{i\in \mN,i\in (-2\sqrt{v\cdot n}+e\cdot n,~2\sqrt{ v\cdot n}+e\cdot n]}}\varepsilon_{i}(R_n)=\int_{-2}^2 \frac{1}{\sqrt{2\pi}}e^{-\frac{1}{2}u^2}\dif u=0.9544
  \end{eqnarray*}
and
    \begin{eqnarray*}
   \lim_{n\rightarrow \infty } \frac{1}{\tilde{\Gamma}_{R_n}(1)  }\sum_{\tiny{j\in \mN,j\in (-2\sqrt{v\cdot n}+e\cdot n,~2\sqrt{ v\cdot n}+e\cdot n]}}\tilde{\gamma}_{j}(R_n)=\int_{-2}^2 \frac{1}{\sqrt{2\pi}}e^{-\frac{1}{2}u^2}\dif u=0.9544.
  \end{eqnarray*}
\end{example}
}

\subsection{Some researches problems}

In the end of this paper,  we demonstrate some  research problems.

 \begin{question}
In our paper, the limits of the  embedding  distributions   for graphs  are normal distributions or  some  discrete distributions. Can we prove that the limit of the   embedding  distributions   for any $H$-linear family of graphs  is a normal distribution or a  discrete distribution. Furthermore, if the maximum genus (Euler-genus) $\eps_{\max}(G)$ of $H$ is great than $0,$ do we have the  limit for the genus  distributions (Euler-genus distribution) for any $H$-linear family of graphs  is a normal distribution?
 \end{question}

 \begin{question}\label{P2}
  Suppose that $\{\widetilde{G}_n\}_{n=1}^\infty$  is a family of graphs with $\beta(\widetilde{G}_n)\rightarrow \infty$ (orientable maximum genus $\gamma_M(\widetilde{G}_n)\rightarrow \infty$). Are the crosscap-number distributions (genus distributions) of $\widetilde{G}_n$  asymptotic normal. \end{question}

A \textit{bouquet of circles} $B_n$ is define as a graph with one vertex and $n$ edges. A \textit{dipole}  $D_n$ is a graph with two vertices joining by $n$ multiple edges. In \cite{GRT89}, Gross, Robbins, and Tucker obtained a second-order recursion for the genus distributions of $B_n.$ The genus distributions of $D_n$ were obtained by Rieper in \cite{Rie87}, and independently by Kwak and Lee in \cite{KL95}. The \mdef{wheel} $W_n$ is a graph formed by connecting a single vertex to each of the vertices of a $n$-cycle. The genus distribution of $W_n$ was obtained in \cite{CGM18} by Chen, Gross and Mansour. The following special case of question \ref{P2} is may not hard.

 \begin{question}  Are the embedding distributions of $B_n$, $D_n$ and $W_n$ asymptotic normal?
 \end{question}

The following question is for the complete graph $K_n$ on $n$ vertices and complete bipartite graph $K_{m,n}.$

 \begin{question}  Are the embedding distributions of $K_n$, and $K_{m,n}$ asymptotic normal?
 \end{question}

Another question is the following.
 \begin{question}
   Is the embedding distribution of a random graph on $n$ vertices  asymptotic normal?
 \end{question}

 \noindent$\bf{Acknowledgement}$
Yichao Chen was supported by NNSFC under Grant No.11471106. Xuhui Peng was supported  by Hunan Provincial Natural Science Foundation of  China under Grant 2019JJ50377.


\end{document}